\newtheorem{theorem}{Theorem}[section]
\newtheorem{lemma}[theorem]{Lemma}
\newtheorem{proposition}[theorem]{Proposition}
\newtheorem{corollary}[theorem]{Corollary}
\theoremstyle{definition}
\theoremstyle{remark}
\newtheorem{remark}[theorem]{Remark}
\numberwithin{equation}{section}
\begin{document}
\author{Junehyuk Jung}
\address{Department of Mathematical Science, KAIST, Daejeon 305-701, South Korea}
\email{junehyuk@kaist.ac.kr}
 \author{Matthew P. Young}
 \address{Department of Mathematics \\
 	  Texas A\&M University \\
 	  College Station \\
 	  TX 77843-3368 \\
 		U.S.A.}
 \email{myoung@math.tamu.edu}
 \thanks{This material is based upon work supported by the National Science Foundation under agreement No. DMS-1128155. (J.J. and M.Y.) and No. DMS-1401008 (M.Y.) .  Any opinions, findings and conclusions or recommendations expressed in this material are those of the authors and do not necessarily reflect the views of the National Science Foundation.   The first author was also partially supported by the National Research Foundation of Korea (NRF) grant funded by the Korea government(MSIP)(No. 2013042157).
}

 \begin{abstract}
We prove a quantitative lower bound on the number of nodal domains of the real-analytic Eisenstein series.  The main tool in the proof is a quantitative restricted QUE theorem where the support of the test function is allowed to shrink with the Laplace eigenvalue.
 \end{abstract}
 \title{Sign changes of the Eisenstein series on the critical line}
 \maketitle
\section{Introduction and statement of results}
Let $(M,g)$ be a smooth Riemannian manifold and let $\phi$ be a real-valued Laplace eigenfunction with the eigenvalue $\lambda^2$. The asymptotic behavior of the nodal set $Z_\phi = \{p\in M~:~ \phi(p)=0\}$ of $\phi$, as $\lambda^2 \to +\infty$, is fundamental in spectral geometry. Nodal domains are the connected components of $M \backslash Z_\phi$, and we denote by $N(\phi)$ the number of nodal domains. The most basic question regarding nodal domains one may ask is if $N(\phi)$ tends to infinity as the eigenvalue grows. Note that there are examples \cite{st,lewy,crit} where there are a bounded number of nodal domains for an infinite sequence of eigenfunctions, so this question is sensitive to the manifold.


Recently, Ghosh, Reznikov, and Sarnak \cite{GRS} have studied this question for the sequence of Hecke-Maass cusp forms on the modular surface $\mathbb{X}=SL_2(\mathbb{Z}) \backslash \mathbb{H}$, and shown that the number of nodal domains grows with the eigenvalue (with a quantitative growth rate) on the assumption of the (unproved) generalized Lindel\"{o}f Hypothesis.  Jang and Jung \cite{JangJung} have unconditionally shown that the number of nodal domains goes to infinity with the eigenvalue, however with no rate of growth.

In this article, we study the nodal domains for the Eisenstein series $E_T(z) = E(z,1/2+iT)$ on $SL_2(\mathbb{Z})\backslash \mathbb{H}$.
Let $E_T^*(z) = \frac{\theta(1/2 +
iT)}{|\theta(1/2 + iT)|} E_T(z)$, where $\theta(s) = \pi^{-s} \Gamma(s)
\zeta(2s)$.  This normalization makes $E_T^*$ real-valued for $z \in \mathbb{H}$.
Our main result is
\begin{theorem}\label{thm:signchange}
Fix $0 \leq \nu < 1/51$.
Then $E_T^*(iy)$ has $\gg T^{\nu}$ sign changes along $[1,3]$, for all sufficiently large $T$.
\end{theorem}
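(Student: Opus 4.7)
My approach would be to split $E_T^*(iy)$ into its zeroth Fourier coefficient $c(y)$, which is an explicit oscillatory function with $\asymp T$ sign changes on $[1,3]$, and the remainder $r(y)$ coming from nonzero Fourier modes, and then use the quantitative restricted QUE advertised in the abstract to argue that $r$ cannot destroy more than $T - O(T^{\nu})$ of these sign changes.

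From the Fourier expansion of $E$ at the cusp,
\[
E_T^*(iy) \;=\; c(y) + r(y), \qquad c(y) \;=\; \omega_T\, y^{1/2+iT} + \overline{\omega_T}\, y^{1/2-iT} \;=\; 2\sqrt{y}\,\cos\!\bigl(T\log y + \vartheta_T\bigr),
\]
where $\omega_T = \theta(1/2+iT)/|\theta(1/2+iT)|$ and $r(y)$ is the divisor sum involving $K_{iT}$-Bessel functions. The function $c$ changes sign $\gg T$ times on $[1,3]$, at points with consecutive spacing $\asymp 1/T$, and $|c(y)|\ge\eta$ outside a set of relative measure $O(\eta)$. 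I would then partition $[1,3]$ into $N \asymp T^{\nu}$ equal subintervals $I_k$ of length $\delta \asymp T^{-\nu}$; each $I_k$ contains $\gg T^{1-\nu}\to\infty$ sign changes of $c$.

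Next, I would apply the quantitative restricted QUE to a smooth bump $\chi_k$ adapted to $I_k$, with support contracting at the rate $T^{-\nu}$ for $\nu<1/51$, to obtain an asymptotic evaluation of $\int_{I_k}|E_T^*|^2\chi_k\,dy$ with an error saving a small power of $T$. Isolating the elementary contribution $\int c^2\chi_k\,dy \asymp \delta$ and the cross term $2\int cr\chi_k\,dy$ (which is small by stationary phase since the phases of $K_{iT}(2\pi ny)$ never stabilize against $y^{\pm iT}$ on $[1,3]$), one expects $\int_{I_k}|r|^2\chi_k\,dy = o(\delta)$ for a positive proportion of indices $k$. Chebyshev then gives $\mathrm{meas}\{y\in I_k : |r(y)|\ge\eta\}=o(\delta)$ for any fixed $\eta>0$, and intersecting with $\{|c|\ge\eta\}$ yields a subset of $I_k$ of measure $\delta\bigl(1-O(\eta)-o(1)\bigr)$ on which $\mathrm{sgn}(E_T^*(iy))=\mathrm{sgn}(c(y))$. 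For $\eta$ sufficiently small this set covers almost all of $I_k$, hence contains complete oscillations of $c$, producing at least one sign change per good $I_k$ and $\gg T^{\nu}$ sign changes in total.

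The hard part is exactly the quantitative restricted QUE with shrinking support, which is the technical heart of the paper: classical restricted QUE permits only a fixed smooth test function on $[1,3]$, whereas here the support must contract like $T^{-\nu}$. Via the Fourier expansion this reduces to estimating sums over pairs $(m,n)$ of $\int K_{iT}(2\pi m y)K_{iT}(2\pi n y)\chi_k(y)\,dy$, which after stationary-phase analysis become shifted convolution sums for the divisor coefficients $\tau_{iT}(m)\tau_{iT}(n)$. The limit of what available bounds---subconvexity for the relevant $L$-functions combined with a Kuznetsov-type spectral decomposition---can deliver with uniformity in both the spectral parameter $T$ and the length scale $\delta$ is what determines the specific exponent $1/51$.
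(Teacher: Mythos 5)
Your decomposition $E_T^*(iy) = c(y) + r(y)$ into the constant Fourier term and the rest is a natural starting point, but the central expectation underpinning your argument---that $r$ is small compared to $c$ on $[1,3]$---is false, and this sinks the proposal. The restricted QUE asymptotic itself (the paper's Theorem \ref{thm:mainthm} with $\alpha=0$) gives $\int_1^3 \psi(y)^2\,|E_T^*(iy)|^2\,\tfrac{dy}{y} \sim \tfrac{6}{\pi}\log T\,\|\psi^2\|_1$, i.e.\ the restricted $L^2$ mass of the Eisenstein series on a fixed compact segment grows like $\log T$. Meanwhile $c(y) = 2\sqrt{y}\cos(T\log y + \vartheta_T)$ has $\int_1^3 |c|^2\,\tfrac{dy}{y} = O(1)$. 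Hence $\int_1^3 |r|^2\,\tfrac{dy}{y}\sim \tfrac{6}{\pi}\log T\cdot(\ldots)$: the remainder dominates the constant term by a factor $\log T$, not the other way around. Consequently $\int_{I_k}|r|^2\chi_k \asymp \delta\log T$ for essentially every $k$, Chebyshev yields $\mathrm{meas}\{y\in I_k: |r(y)|\ge\eta\}\ll \delta\log T/\eta^2$, which is \emph{larger} than $\delta$ for fixed $\eta$, and you cannot conclude $\mathrm{sgn}(E_T^*)=\mathrm{sgn}(c)$ on most of $I_k$. The constant-term-dominance mechanism only operates high in the cusp (roughly $y>T$), which the paper explicitly flags as producing only ``trivial'' nodal domains; it does not function on $[1,3]$.

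The paper's actual mechanism is different in kind: it evaluates the autocorrelation $I_{\psi,\alpha}(T) = \int \psi(y)\psi((1+\tfrac{\alpha}{T})y)\,E_T^*(iy)\,E_T^*\bigl(i(1+\tfrac{\alpha}{T})y\bigr)\tfrac{dy}{y}$ at the semiclassical scale $\alpha/T$, shows it is asymptotic to $\tfrac{6}{\pi}\log T\cdot J_0(\alpha)\|\psi^2\|_1$ with an error term uniform in test functions of shrinking support $A = T^\delta$, and then picks $\alpha=4$ so that $J_0(4)<0$. Since $\psi\ge 0$, a strictly negative value of $I_{\psi,\alpha}$ forces the product $E_T^*(iy)E_T^*(i(1+\tfrac{4}{T})y)$ to be negative somewhere on $\mathrm{supp}\,\psi$, hence at least one sign change per bump. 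Tiling $[1,3]$ by $\asymp T^\delta$ disjoint bumps $\psi_{T,j}$ of width $T^{-\delta}$ then gives $\gg T^\nu$ sign changes. You correctly identified that a quantitative restricted QUE with shrinking support and a shifted-divisor-sum estimate form the technical core, but the sign-detection must come from a negative autocorrelation at scale $1/T$ (a ``sign-detecting symbol'' in the microlocal language of Section 2), not from a pointwise comparison against the constant term.
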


Note that if $(M,g)$ has an isometric involution $\tau : M \to M$ such that the fixed point set $\mathrm{Fix}(\tau)$ contains a geodesic segment $\gamma$, then the sign changes along $\gamma$ give rise to nodal domains \cite{GRS, JZ1, GRS2}. Since $x+iy \mapsto -x+iy$ on $\mathbb{H}$ induces an orientation reversing isometric involution on $\mathbb{X}$, whose fixed point set contains the geodesic segment $\{iy\in \mathbb{H}~:~1\leq y\leq 3\}$, we infer from Theorem \ref{thm:signchange} that:


\begin{corollary}
For $0 \leq \nu < 1/51$, $E_T^*(z)$ has $\gg T^{\nu}$ nodal domains that meet the segment $\{iy\in \mathbb{H}~:~ 1 \leq y \leq 3\}$.
\end{corollary}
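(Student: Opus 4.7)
The plan is to deduce the corollary from Theorem~\ref{thm:signchange} via the topological principle recalled in the paragraph above the statement: for a $\tau$-invariant real eigenfunction, sign changes along a geodesic segment in $\mathrm{Fix}(\tau)$ produce distinct nodal domains meeting that segment (cf.\ \cite{GRS, JZ1, GRS2}).

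First I would verify that $E_T^*$ is invariant under $\tau : z \mapsto -\bar z$. Writing $E(z,s)=\tfrac12\sum_{(c,d)=1} y^s/|cz+d|^{2s}$, the substitution $c\mapsto -c$ (which preserves the set of coprime pairs) gives $E(-\bar z,s)=E(z,s)$; the prefactor $\theta(1/2+iT)/|\theta(1/2+iT)|$ is a unimodular constant independent of $z$, so $E_T^*(-\bar z)=E_T^*(z)$. Since $\tau$ descends to an orientation-reversing isometric involution of $\mathbb{X}$ fixing $\gamma := \{iy : 1 \le y \le 3\}$ pointwise, the nodal set of $E_T^*$ is $\tau$-symmetric, and every nodal domain that meets $\gamma$ is $\tau$-invariant.

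Next I would invoke Theorem~\ref{thm:signchange} to obtain $N \gg T^{\nu}$ sign changes of $E_T^*(iy)$ on $[1,3]$, which partition $\gamma$ into $N+1$ maximal subarcs of alternating constant sign, each contained in some nodal domain. The remaining topological claim, which is the heart of the matter, is that two disjoint such subarcs cannot lie in a single nodal domain $\Omega$. Suppose for contradiction that they do; any arc $P\subset\Omega$ joining them, together with its reflection $\tau(P)\subset\Omega$, bounds a Jordan curve in $\bar\Omega$ that, on a suitable lift to $\mathbb{H}$, encloses an intermediate subarc of $\gamma$ on which $E_T^*$ has the opposite sign, contradicting that $E_T^*$ has constant sign on $\Omega$. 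Consequently the $N+1$ subarcs correspond to $\gg T^{\nu}$ distinct nodal domains meeting $\gamma$. All of the analytic substance has been absorbed into Theorem~\ref{thm:signchange}; the only delicate point in the corollary is executing the Jordan curve argument on an appropriate lift and verifying that one really produces a Jordan curve (the $\tau$-invariant neighborhood of $\gamma$ embeds in $\mathbb{H}$), which is by now routine from the cited literature.
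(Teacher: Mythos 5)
Your overall structure matches the paper's: invoke Theorem~\ref{thm:signchange} for the sign changes, check that $z \mapsto -\bar z$ is an isometric involution of $\mathbb{X}$ fixing the segment $\gamma$ pointwise with $E_T^*$ even, and then cite the topological principle from \cite{GRS, JZ1, GRS2} that sign changes along a geodesic in $\mathrm{Fix}(\tau)$ produce nodal domains. The paper itself does nothing more than this.

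However, your sketch of that topological principle has a real gap. The claim that ``two disjoint such subarcs cannot lie in a single nodal domain'' is false: two non-adjacent subarcs of the same sign can certainly lie in the same nodal domain $\Omega$, with the intermediate opposite-sign subarc living in an ``island'' nodal domain enclosed inside $\Omega$, making $\Omega$ non-simply-connected. Your proposed contradiction fails at exactly this point: the curve $P\cup\tau(P)$ lies in $\Omega$, but it need not bound a disk \emph{inside} $\Omega$, so the region it encloses (which contains the intermediate subarc) need not be a subset of $\Omega$, and there is no conflict with $E_T^*$ having constant sign on $\Omega$. What the Jordan curve argument actually shows, and what the cited references prove, is the weaker statement that the ``lies in the same nodal domain'' relation on the $N+1$ subarcs is a noncrossing (nested) partition: a subarc trapped inside $P\cup\tau(P)$ can never be identified with a subarc outside it. Combined with the trivial observation that consecutive subarcs (opposite signs) are never identified, noncrossing-ness still forces $\gg N$ distinct blocks, hence $\gg T^{\nu}$ nodal domains meeting $\gamma$. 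Since you ultimately defer to the literature for the details, the conclusion you reach is correct, but the explicit sketch as written would not survive scrutiny.
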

\begin{remark}
For very large values of $y$ (say, $y > T$), the Eisenstein series is closely approximated by the constant term in its Fourier expansion, and it is easy to show that there are infinitely many ``trivial" nodal domains of $E_T^*(z)$ for $y >T$.

The method of Ghosh, Reznikov, and Sarnak \cite{GRS} produces sign changes of Hecke-Maass cusp forms high in the cusp, $ T>y>\frac{T}{100}$.

\end{remark}

\section{Quantum Unique Ergodicity and sign-detecting symbols}
In this section, we describe the motivation behind the proof. A similar idea is used in \cite{JangJung}, where the authors prove that the number of nodal domains of certain eigenfunctions tends to infinity.

\subsection{Background on Quantum Ergodicity}
Let $(M,g)$ be a compact smooth Riemanian manifold without boundary, and let $\{u_j\}_{j=1,2,\ldots}$ be an orthonormal eigenbasis of the Laplace--Beltrami operator $\Delta_g$ on $M$. We denote by $\lambda_j^2$ the eigenvalue corresponding to $u_j$, and we assume that $0=\lambda_1<\lambda_2 \leq \lambda_3 \leq \cdots$. For a symbol $a \in C^\infty (T^*M)$, we denote by $\mathrm{Op}(a)$ the quantum observable obtained through the standard quantization of $a$. Here $T^*M$ is the cotangent bundle of $M$. The Quantum Ergodicity theorem by Colin de Verdi\'ere, Shnirelman, and Zelditch \cite{CdV,Snirelman,Zelditch} implies that if the geodesic flow defined on $M$ is ergodic, then there exists a density one subset $B$ of $\mathbb{N}$ such that the following is true:
\[
\lim_{\substack{j \to +\infty\\j \in B}} \langle \mathrm{Op}(a) u_j, u_j \rangle_M = \int_{S^*M} a \thinspace d\mu,
\]
where $d\mu$ is the Liouville measure on the unit cotangent bundle $S^*M$. We say Quantum Unique Ergodicity (QUE) holds for $\{u_j\}_{j=1,2,\ldots}$, when we can take $B=\mathbb{N}$.

Now assume that there exists an orientation reversing isometric involution $\tau: M \to M$ such that the fixed point set $\mathrm{Fix}(\tau)$ contains a hypersurface $H \subset M$. Assume further that $\{u_j\}_{j=1,2,\ldots}$ is an eigenbasis for the space of even $L^2$ functions. In this case, the Quantum Ergodic Restriction theorem \cite{ctz,dz,tz1} implies that there exists a density one subset $B$ of $\mathbb{N}$ such that for any compactly supported smooth function $f \in C_0^\infty (H)$, we have
\[
\lim_{\substack{j \to +\infty\\j \in B}}\langle f u_j, u_j \rangle_H = 2\int_H f d\mu_H.
\]
If QUE for the restriction to $H$ is true, then one can take $B=\mathbb{N}$.

In the following section, we are going to discuss how one can detect sign changes of eigenfunctions along a curve using these two ingredients.

\subsection{Detecting sign changes}
Fix a point $x_0$ on $H$ and consider Fermi normal coordinates $x=(x_1, \ldots, x_n)$ of $M$ near $x_0$. In these coordinates, $(x_1,\ldots, x_{n-1},0)=(x',0)$ represent points on $H$. Let $\xi$ and $\xi'$ be covectors corresponding to $x$ and $x'$ respectively. We denote by $B^*H$ a unit ball bundle over $H$, which is obtained by restricting base points of $S^*M$ to $H$, i.e.,
\[
B^*H := \{(x,\xi)\in S^*M~:~ x \in H\}.
\]
In a recent paper  by Christianson, Toth, and Zelditch \cite{ctz}, the authors prove that QUE for $\{u_j\}$ on the ambient manifold $M$ implies a certain unique behavior of the restrictions $\{u_j|_H\}$ to the hypersurface. This in particular implies that, assuming QUE, we have
\begin{equation}\label{eq1}
\langle \mathrm{Op}_H(a) (1+\lambda_j^{-2}\Delta_H)u_j, u_j \rangle_H \xrightarrow{j\to +\infty} \frac{4}{\mu (S^*M)}  \int_{B^*H} a(x,\xi)(1-|\xi'|^2)^{1/2}d\sigma,
\end{equation}
where $\{u_j\}_{j=1,2,\ldots}$ is an eigenbasis for the space of even $L^2$ functions. Here $d\sigma$ is the Liouville measure on $B^*H$. If we assume further that QUE for the restriction to $H$ is true, then one can in fact remove $(1+\lambda_j^{-2}\Delta_H)$ in \eqref{eq1} and the corresponding $1-|\xi'|^2$ factor on the right hand side, so that
\begin{equation}\label{eq2}
\langle \mathrm{Op}_H(a)u_j, u_j \rangle_H \xrightarrow{j\to +\infty} \frac{4}{\mu (S^*M)} \int_{B^*H} a(x,\xi)(1-|\xi'|^2)^{-1/2}d\sigma.
\end{equation}
When $M$ is a surface and $H=\gamma$ is a geodesic segment, we may simplify \eqref{eq2} for $a(x,\xi)$ given by $f(x)\theta(\xi)$ as follows:
\begin{equation}\label{eq3}
\lim_{j \to +\infty}\langle \mathrm{Op}_\gamma(a)u_j, u_j \rangle_\gamma = \frac{2}{\pi} \int_\gamma f(x) dx \int_{-1}^1 \theta(\xi) \frac{d\xi}{\sqrt{1-\xi^2}}.
\end{equation}
Now we take $\theta(\xi)=e^{i\alpha\xi}$, so that \eqref{eq3} becomes
\begin{equation}\label{eq4}
\lim_{j \to +\infty} \int_\gamma f(x)u_j(x+\alpha\lambda_j^{-1})\overline{u_j(x)} dx = 2 J_0(\alpha) \int_\gamma f(x) dx.
\end{equation}
\begin{remark}\label{rem:arc}
Since we are taking Fermi normal coordinates, $x+\alpha\lambda_j^{-1}$ is the unique point on $\gamma$ that is $\alpha\lambda_j^{-1}$ away from $x$ in the given direction.
\end{remark}
Observe that if we take $f$ to be a non-negative function, and $\alpha$ to be any constant such that $J_0(\alpha)<0$, then this implies that all but finitely many $u_j$ have at least one sign change on a fixed open set that contains the support of $f$.

For the Eisenstein series $E\left(\cdot, \frac{1}{2}+iT\right)$ on the full modular surface $SL(2,\mathbb{Z}) \backslash \mathbb{H}$, QUE is known due to Luo--Sarnak \cite{LuoSarnakQUE} (without a rate of convergence).
QUE for Eisenstein series on $SL_2(\mathbb{Z})\backslash \mathbb{H}$ with a rate (and varying test function) was first obtained in \cite{Young}.
Earlier, Jakobson \cite{Jakobson} proved QUE for Eisenstein series on $SL_2(\mathbb{Z}) \backslash SL_2(\mathbb{R})$, but without a rate.
The restricted QUE for Eisenstein series with a rate was shown in \cite{Young}, but there the test function was held fixed.

Based on the discussion in this section, one would expect
to be able to give a quantitative lower bound for the number of sign changes of the Eisenstein series on a fixed compact geodesic segment on $\{iy~:~y>0\}$, assuming we had quantitative QUE theorems on both $SL_2(\mathbb{Z}) \backslash SL_2(\mathbb{R})$ and on the geodesic segment, where in both cases the test function must be allowed to vary.

In this article, we sidestep some of these technical problems and \emph{directly} study the left hand side of \eqref{eq4}, which is a somewhat modified version of quantitative QUE for the geodesic segment.  In this way, we completely bypass the need for quantitative QUE on $SL_2(\mathbb{Z}) \backslash SL_2(\mathbb{R})$ (which of course would be interesting for its own sake).

\section{Statement of result}
Suppose that $\psi$ is a nonnegative, smooth function with support on $[1, 3]$ (any fixed interval would be acceptable). Define
\begin{equation*}
 I_{\psi, \alpha}(T) = \int_0^{\infty} \psi(y) \psi\left(\left(1+\frac{\alpha}{T}\right)y\right)
E_T^*(iy) E_T^*\left(i\left(1+\frac{\alpha}{T}\right)y\right) \frac{dy}{y},
\end{equation*}
where $\alpha \in \mathbb{R}$ is fixed.
\begin{remark}
Note that $\mathbb{H}$ is endowed with the line element $ds^2 = y^{-2}(dx^2+dy^2)$. Hence $i\left(1+\frac{\alpha}{T}\right)y$ is the point on $\{iy~:~y>0\} \subset \mathbb{H}$ which is $\log \left( 1+\frac{\alpha}{T}\right) \sim \frac{\alpha}{T}$ away from $y$. Compare to Remark \ref{rem:arc}.
\end{remark}

In \cite[Theorem 1.2]{Young} it is shown that
\begin{equation*}
I_{\psi,0}(T) = 2 \frac{3}{\pi} \log(1/4 + T^2) \int_0^{\infty} \psi(y)^2 \frac{dy}{y} + o(\log T),
\end{equation*}
for $\psi$ fixed.
In fact, one has a fully developed main term with a power saving error term. Here we wish to allow $\alpha$ and $\psi$ to vary, which in principle should be possible due to the power-saving error term, but this requires a different proof with some new innovations.
The main technical difficulty is in bounding a certain off-diagonal main term, which occurs in Section \ref{section:ODMT} below.
The conditions we place on $\psi$ are that
\begin{equation}
 \label{eq:psibound}
 \psi^{(j)}(y) \ll_j A^j,
\end{equation}
where $A \geq 1$ is allowed to depend on $T$.  For simplicity, we shall assume that all $\psi$ have support on the same interval $[1, 3]$, but it would also be interesting to have moving support.
In the end, we will take $A$ to be a small power of $T$.  We also desire that $\psi$ is not too small everywhere.  To capture this, we assume that
\begin{equation}
\label{eq:psiintegrallowerbound}
 \int_0^{\infty} \psi(y)^2 \frac{dy}{y} \gg A^{-1}.
\end{equation}
For instance, if there exists a point $y_0 > 0$ so that $\psi^2(y_0) \geq 1$, then we can deduce \eqref{eq:psiintegrallowerbound} using the fact that $\psi(y)^2 \geq 1/2$ in a neighborhood around $y_0$ of length $\ll A^{-1}$ (using \eqref{eq:psibound} and the mean value theorem).  In addition, we assume that
\begin{equation}
\label{eq:psiprimeandpsisquaredrelation}
 A^{-2} \int_0^{\infty} \psi'(y)^2 \frac{dy}{y} \ll \int_0^{\infty} \psi(y)^2 \frac{dy}{y}.
\end{equation}
We often use the notation
\begin{equation*}
 \| \psi^2 \|_1 = \int_0^{\infty} \psi(y)^2 \frac{dy}{y}.
\end{equation*}

\begin{theorem}
\label{thm:mainthm}
Suppose that $\psi$ satisfies the above conditions with $A \ll T^{\delta}$ with $0 \leq \delta < \frac{1}{51}$.
Then for fixed $\alpha \in \mathbb{R}$, we have
\begin{equation*}
I_{\psi,\alpha}(T)
=
 2 \frac{3}{\pi} \log(1/4 + T^2) J_0(\alpha)\int_0^{\infty} \psi(y)^2 \frac{dy}{y}
 + O_{\alpha, \delta}((\log T)^{2/3+\varepsilon} \| \psi^2 \|_1).
\end{equation*}
\end{theorem}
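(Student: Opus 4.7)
The plan is to substitute the Fourier expansion of the Eisenstein series at $x=0$,
\[
E_T(iy) = y^{\frac12+iT} + \varphi\!\left(\tfrac12+iT\right) y^{\frac12-iT} + \frac{2\sqrt{y}}{\xi(1+2iT)}\sum_{n\geq 1}\tau_{iT}(n)\, K_{iT}(2\pi n y),
\]
where $\tau_{iT}(n)=\sum_{ab=n}(a/b)^{iT}$, into the product $E_T(iy)E_T(iy')$ with $y'=(1+\alpha/T)y$, and integrate term by term against $\psi(y)\psi(y')\,dy/y$. The expansion produces three qualitatively different pieces: (i) constant$\,\times\,$constant, (ii) constant$\,\times\,$Bessel, and (iii) Bessel$\,\times\,$Bessel. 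The case $\alpha=0$ with $\psi$ fixed is \cite[Theorem 1.2]{Young}; the new content is to reprove the asymptotic with fully uniform control in $\alpha$ and in the oscillation budget $A\ll T^\delta$ of \eqref{eq:psibound}.

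In (i), the two sub-pieces carrying $y^{\pm 2iT}$ are killed by repeated integration by parts in $y$ (each step costs $A/T$, negligible since $\delta<1/51$); the two non-oscillatory ones combine—after using $|\varphi(\tfrac12+iT)|=1$ together with the prefactor $\theta(\tfrac12+iT)^2/|\theta(\tfrac12+iT)|^2$ coming from $E_T^*E_T^*$—into an $O(\|\psi^2\|_1)$ contribution absorbed in the error. In (ii), each term is a single Fourier sum multiplied by a mildly oscillatory $y^{\pm iT}$; stationary phase in $y$ localizes the integral and yields power savings in $T$ that, with careful bookkeeping in $A$ and $\alpha$, land inside the allowed error.

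The bulk $\log(\tfrac14+T^2)$ main term comes from the diagonal $m=n$ of (iii),
\[
\frac{4}{|\xi(1+2iT)|^{2}}\sum_{n\geq 1}|\tau_{iT}(n)|^{2}\int_0^\infty \psi(y)\psi(y')\sqrt{yy'}\,K_{iT}(2\pi ny) K_{iT}(2\pi ny')\,\frac{dy}{y}.
\]
Applying the uniform oscillatory asymptotic of $K_{iT}(u)$ on $u<T$, the product $K_{iT}(u)K_{iT}(u(1+\alpha/T))$ splits via $\cos A\cos B=\tfrac12[\cos(A-B)+\cos(A+B)]$ into a fast sum-of-phases (negligible by parts) plus a slow difference-of-phases equal, to leading order, to $\cos(\alpha\sqrt{1-(u/T)^2})$, multiplied by an amplitude $\asymp T^{-1}(1-(u/T)^2)^{-1/2}e^{-\pi T}$. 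Opening the $n$-sum by Mellin and using
\[
\sum_{n\geq 1}\frac{|\tau_{iT}(n)|^{2}}{n^{s}}=\frac{\zeta(s)^{2}\zeta(s+2iT)\zeta(s-2iT)}{\zeta(2s)},
\]
the double pole at $s=1$ delivers the $\log T$; the asymptotic $|\xi(1+2iT)|^{-2}\sim 2^{-1}e^{\pi T}|\zeta(1+2iT)|^{-2}$ cancels both the $e^{-\pi T}$ from the amplitude and the $|\zeta(1+2iT)|^{2}$ residue factor. The remaining integral over $c=u/T\in(0,1)$ with measure $(1-c^2)^{-1/2}\,dc$ converts $\cos(\alpha\sqrt{1-c^2})$ into $\tfrac{\pi}{2}J_0(\alpha)$, and the outer $y$-integral returns $\|\psi^2\|_1$, producing precisely the main term $\tfrac{6}{\pi}\log(\tfrac14+T^{2})J_0(\alpha)\|\psi^2\|_1$.

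The hardest step is the off-diagonal $m\neq n$ part of (iii), which is exactly Section~\ref{section:ODMT} of the paper. After opening $\tau_{iT}$ as a sum over $ab=n$ and applying Voronoi on one inner sum, the resulting oscillatory double sum does not simply exhibit square-root cancellation: a secondary off-diagonal main term appears that is superficially comparable to the genuine main term. I would isolate this term by a contour shift in the Mellin variable, identify it as a residue, and cancel it against the analogous polar contribution from (ii) (or from a boundary term in the Mellin analysis of the diagonal). The residual genuinely oscillatory sums would then be estimated by large-sieve/Weil-type bounds; I expect the explicit exponent $1/51$ to emerge from balancing the resulting power saving against the lower bound $\|\psi^2\|_1\gg A^{-1}$ in \eqref{eq:psiintegrallowerbound}, so that the total error stays below $(\log T)^{2/3+\varepsilon}\|\psi^2\|_1$.
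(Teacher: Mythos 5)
Your proposal reaches the main term by a genuinely different route than the paper. Where the paper applies Parseval to write $I_{\psi,\alpha}(T)=\frac{1}{2\pi}\int F(-it)F(it)(1+\alpha/T)^{-it}\,dt$, uses the binomial expansion to separate the variable $t$, and then assembles $J_0(\alpha)$ from the moment identity $\sum_{\ell}\frac{(-i\alpha)^\ell}{\ell!}c_\ell=J_0(\alpha)$ applied to the $K_\ell$ integrals, you propose to square out the Fourier expansion directly in $y$-space and produce $J_0(\alpha)$ from the uniform oscillatory asymptotic of the $K$-Bessel product via $\int_0^1\cos\big(\alpha\sqrt{1-c^2}\big)\frac{dc}{\sqrt{1-c^2}}=\frac{\pi}{2}J_0(\alpha)$. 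That calculation does yield the right main term; a practical advantage of the paper's Mellin route, however, is that the constant and non-constant parts of $E_T^*$ separate cleanly by frequency, so the constant$\times$Bessel cross-terms are negligible simply from the rapid decay of $\widetilde\psi(1/2+it\pm iT)$ on $|t|\leq T-T_0$, while your item (ii) genuinely has a near-stationary phase for small $n$ (the phase derivative is only of size $n^2/T$), so "stationary phase yields power savings" is not automatic -- one must exploit cancellation in the $n$-sum (subconvexity for $\zeta$), which is essentially what Lemma \ref{lemma:FboundSubconvexity} and Lemma \ref{lemma:degeneraterange} do on the Mellin side.

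The more serious gap is in your treatment of the off-diagonal main term, which you flag as the hardest step. You propose to isolate it as a residue and \emph{cancel} it against a polar contribution from (ii) or from a boundary term in the diagonal Mellin analysis. No such cancellation occurs in the paper, and I do not see one available. The off-diagonal main term $M.T.^{(\Delta)}_{OD}$ produced by the shifted convolution estimate (Prop.\ \ref{prop:SCS}) is genuinely present; what Lemma \ref{lemma:MTODbound} shows is that it is small -- of size $O((\log\log T)\|\psi^2\|_1)$ rather than $O(\log T\,\|\psi^2\|_1)$ -- by a direct argument. After elementary reductions it becomes $M_\pm$; $M_-$ is trivially small, and $M_+$ is handled by writing $\sigma_{-1}(h)=\sum_{ab=h}a^{-1}$ and applying Poisson summation in $b$ for small $a\leq D$. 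The crucial structural point is that the weight $w_2(v)=w(v^2)$ appearing in $S_\Delta=\sum_{h\neq0}\sigma_{-1}(h)\widehat{w_2}(h/V)$ vanishes at $v=0$ and is supported away from $0$ (because the dyadic partition cuts off $|T-t|\asymp\Delta$), so the Poisson dual sum is \emph{empty} when $a\ll V$; this yields $S_\Delta\ll (T/\Delta)^{1/2}/D+\log D$, and choosing $D=\log T$ gives $\log\log T$ after the dyadic sum over $\Delta$. This saving from $\log T$ down to $\log\log T$ -- which is exactly what is needed to push the error term below the $\log T$ main term -- comes from the vanishing of $w_2$ at the origin, not from any cancellation between pieces of the decomposition. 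Your proposal is missing this idea and would need to supply it.
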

The error term in Theorem \ref{thm:mainthm} depends continuously on $\alpha$.

\section{Notation and basic lemmas}
\subsection{Notation}
The Fourier expansion of $E_T^*$ is
\begin{equation}
\label{eq:ETFourier}
 E_T^*(x+iy) = \mu y^{1/2 + iT} + \overline{\mu} y^{1/2-iT} + \rho^*(1) \sum_{n
\neq 0} \frac{\tau_{iT}(n) e(nx)}{|n|^{1/2}} V_T(2 \pi |n| y),
\end{equation}
where $\rho^*(1) = (2/\pi)^{1/2} |\theta(1/2 + iT)|^{-1}$,
$\theta(s) = \pi^{-s} \Gamma(s) \zeta(2s)$,
$\mu =
\frac{\theta(1/2 + iT)}{|\theta(1/2 + iT)|}$, $V_T(y) = \sqrt{y} K_{iT}(y)$, $e(x)=e^{2\pi i x}$, and $\tau_{iT}(n) = \sum_{ab = |n|} (a/b)^{iT}$.
Here $\zeta(s)$ is the Riemann zeta function, and $K_{iT}(y)$ is the modified Bessel function of the second kind ($K$-Bessel function). By a well-known formula for the Mellin trasform of the $K$-Bessel function, we
have
\begin{equation}
\label{eq:gammaVformula}
 \gamma_{V_T}(1/2 + s) := \int_0^{\infty} V_T(2 \pi y) y^s \frac{dy}{y} =
2^{-3/2} \pi^{-s} \Gamma\left(\frac{1/2 + s + iT}{2}\right) \Gamma\left(\frac{1/2 +
s - iT}{2}\right).
\end{equation}
It is also helpful to recall
\begin{equation*}
 \gamma_{V_T^2}(1+s):= \int_0^{\infty} V_T(2\pi y)^2 y^s \frac{dy}{y} = 2^{-2}
\pi^{-s} \frac{\Gamma(\frac{1+s+2iT}{2}) \Gamma(\frac{1+s}{2})^2
\Gamma(\frac{1+s-2iT}{2})}{\Gamma(1+s)},
\end{equation*}
and
\begin{equation}
\label{eq:Zformula}
 Z(s,E_T) := \sum_{n=1}^{\infty} \frac{\tau_{iT}(n)^2}{n^s} = \frac{\zeta(s-2iT)
\zeta(s+2iT) \zeta(s)^2}{\zeta(2s)}.
\end{equation}

\subsection{Miscellaneous lemmas}
Here we collect some basic tools used throughout this paper.

\begin{lemma}[Vinogradov-Korobov]
 For some $c > 0$ and for any $|t| \gg 1$, $1- \frac{c}{(\log |t|)^{2/3}} \leq \sigma \leq 1$, we have
 \begin{equation*}
  \zeta(\sigma + it) \ll (\log |t|)^{2/3},
 \end{equation*}
and
\begin{equation}
\label{eq:VinogradovKorobovZetalogarithmicderivative}
 \frac{\zeta'}{\zeta}(1 + it) \ll (\log |t|)^{2/3 + \varepsilon},
\end{equation}
\end{lemma}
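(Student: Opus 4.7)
My plan is to treat this lemma as the standard Vinogradov--Korobov package and reduce both bounds to two external inputs that I would cite rather than prove: Vinogradov's mean value theorem for exponential sums, and the Vinogradov--Korobov zero-free region
\[
\sigma \geq 1 - \frac{c}{(\log|t|)^{2/3}(\log\log|t|)^{1/3}},
\]
both available in Titchmarsh (Chapter~6) or Ivi\'c.

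For the upper bound on $\zeta(\sigma+it)$, I would first invoke the approximate functional equation to replace $\zeta(\sigma+it)$, in the strip $1-c(\log|t|)^{-2/3} \leq \sigma \leq 1$, by a truncated Dirichlet sum $\sum_{n \leq |t|^{1/2}} n^{-\sigma-it}$ up to a negligible error. I would then decompose dyadically and estimate each block $\sum_{N<n\leq 2N} n^{-\sigma-it}$ by combining Van der Corput's $B$-process (to expose a polynomial phase) with Vinogradov's mean value theorem applied to the resulting system of degree $K$. Optimizing $K$ of order $(\log|t|)^{1/3}$ produces per-block savings which, summed dyadically and combined with the trivial contribution from small $N$, yield the claimed $(\log|t|)^{2/3}$ bound uniformly in the stated strip.

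For the logarithmic derivative estimate, I would use the Borel--Carath\'eodory inequality. Since $\zeta$ is non-vanishing in a disk $D$ around $1+it$ of radius comparable to the zero-free-region width $\eta := c(\log|t|)^{-2/3}(\log\log|t|)^{-1/3}$, a branch of $\log\zeta$ is analytic on $D$, and by the first part $\Re\log\zeta \ll \log\log|t|$ throughout $D$. Borel--Carath\'eodory (comparing the modulus of an analytic function on a smaller disk to the real part on a larger one) then yields
\[
\left|\frac{\zeta'}{\zeta}(1+it)\right| \ll \frac{\log\log|t|}{\eta} \ll (\log|t|)^{2/3}(\log\log|t|)^{4/3},
\]
which is absorbed into $(\log|t|)^{2/3+\varepsilon}$.

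The main obstacle in a from-scratch argument would be Vinogradov's mean value theorem itself, a deep input (now streamlined via Wooley's efficient congruencing or Bourgain--Demeter--Guth decoupling); I would cite it rather than reprove it. Everything else is routine manipulation of the approximate functional equation together with the standard complex-analytic machinery applied to a disk whose radius is matched to the zero-free region.
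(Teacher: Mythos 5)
The paper does not prove this lemma at all: it simply refers the reader to Iwaniec--Kowalski, Corollary 8.28 and Theorem 8.29. Since both Vinogradov--Korobov bounds are deep standard results, quoting them is the right move in a paper whose actual content lies elsewhere, and your sketch is likewise correct to defer the genuinely hard input (Vinogradov's mean value theorem, or equivalently its modern decoupling/efficient-congruencing proofs) to a citation. What your outline adds is a reminder of where the bounds come from: approximate functional equation, dyadic decomposition, the Vinogradov exponential-sum estimate giving the $(\log|t|)^{2/3}$ savings and the matching zero-free region, then a Borel--Carath\'eodory/Cauchy argument converting these into the bound on $\zeta'/\zeta(1+it)$, with the $(\log\log|t|)^{O(1)}$ losses absorbed by the $(\log|t|)^{\varepsilon}$.

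Two small technical inaccuracies in the sketch are worth flagging. First, the reduction of $\sum_{N<n\leq 2N} n^{-\sigma-it}$ to a polynomial Weyl sum is done by directly Taylor-expanding the phase $-t\log(N+m)$ in $m$ to degree $K$; the Van der Corput $B$-process (Poisson summation/stationary phase) is not part of the Vinogradov--Korobov argument and would introduce an unwanted dual sum. Second, the Borel--Carath\'eodory step is conventionally run on disks centered slightly to the right of $\mathrm{Re}(s)=1$, say at $1+1/\log|t|+it$, where $|\log\zeta|$ is easily controlled (a lower bound on $|\zeta|$ there is elementary), with the disk reaching just into the zero-free region; centering at $1+it$ itself is slightly circular since a lower bound on $|\zeta(1+it)|$ is itself a consequence of the zero-free region. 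Neither issue affects the final exponents, and with those adjustments your outline matches the textbook derivation the paper is implicitly invoking.
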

For a reference, see \cite[Corollary 8.28, Theorem 8.29]{IK}.

We will need Iwaniec's bound on the fourth moment of the Riemann zeta function over a short interval:
\begin{proposition}[Iwaniec \cite{IwaniecFourthMoment}]
\label{prop:Iwaniec}
Let $1 \ll U \ll V$.  Then
 \begin{equation*}
\int_{V \leq |u| \leq V + U} |\zeta(1/2 +iu)|^4 du
\ll (U + V^{2/3})^{1+\varepsilon}.
\end{equation*}
\end{proposition}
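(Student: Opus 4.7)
The plan is to combine an approximate functional equation for $\zeta(s)^2$ with a mean-value estimate for the resulting Dirichlet polynomial. For $|t| \asymp V$ one has a representation
\[
\zeta(1/2+it)^2 = \sum_{n \geq 1} \frac{d(n)}{n^{1/2+it}} W\!\left(\frac{n}{V}\right) + \chi(1/2+it)^2 \sum_{n \geq 1} \frac{d(n)}{n^{1/2-it}} W\!\left(\frac{n}{V}\right) + O(V^{-10}),
\]
where $W$ is a smooth cutoff essentially $1$ on $[0,1]$ and of rapid decay beyond, and the archimedean factor satisfies $|\chi(1/2+it)|=1$. Consequently, $|\zeta(1/2+it)|^4$ is bounded up to negligible error by the squared modulus of the Dirichlet polynomial $D(u) := \sum_n d(n) W(n/V) n^{-1/2-iu}$, so the task reduces to bounding $\int_{V \leq |u| \leq V+U} |D(u)|^2 \, du$.

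Expanding the square and integrating produces
\[
\int_V^{V+U} |D(u)|^2 \, du = \sum_{m,n} \frac{d(m) d(n) W(m/V) W(n/V)}{(mn)^{1/2}} \int_V^{V+U} (m/n)^{iu} \, du,
\]
whose inner integral is bounded by $\min(U, |\log(n/m)|^{-1}) \ll \min(U, V/|n-m|)$ since $m, n \asymp V$. The diagonal $m = n$ contributes $\ll U \sum_{n \asymp V} d(n)^2/n \ll U (\log V)^4$, which absorbs into the $U^{1+\varepsilon}$ part of the target. Writing $n = m+h$ with $h \neq 0$, the off-diagonal becomes a sum over shifts $1 \leq |h| \ll V$ of smoothly weighted binary additive divisor correlations $\sum_m d(m) d(m+h) \omega_h(m)$, with $\omega_h$ localized at $m \asymp V$ and of size $\min(U, V/|h|)/V$.

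The main obstacle is this off-diagonal estimate, and it is precisely where the $V^{2/3+\varepsilon}$ saving emerges. The trivial bound $\sum_{m \leq V} d(m) d(m+h) \ll V \log^2 V$ would yield only $(U+V)\log^{O(1)} V$, which is too weak when $U$ is substantially smaller than $V$. Instead one invokes the Kloosterman-sum / Kuznetsov-formula treatment of the binary additive divisor problem (Deshouillers--Iwaniec, Motohashi, Meurman): the shifted divisor correlation decomposes into an explicit main term plus a spectral remainder of shape $V^{2/3+\varepsilon}$, with uniformity in $h$ controlled by the spectral large sieve. Summing the weighted contributions in $h$ and combining with the diagonal yields the stated bound $(U+V^{2/3})^{1+\varepsilon}$. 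Iwaniec's original 1980 approach is in essentially this spirit but phrased directly in terms of Fourier coefficients of Maass cusp forms of increasing level, where the spectral large sieve again plays the decisive role in producing the $V^{2/3}$ exponent.
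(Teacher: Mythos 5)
The paper does not prove this proposition at all --- it is quoted verbatim from Iwaniec's paper as an external input --- so there is no internal argument to compare yours against. Your sketch follows the standard architecture behind every proof of this bound (approximate functional equation for $\zeta^2$, diagonal versus off-diagonal, reduction to the binary additive divisor problem, spectral/Kloosterman input for the $V^{2/3}$ exponent), but as written it has a genuine gap at the off-diagonal \emph{main} terms. Once you bound the inner $u$-integral by $\min(U, V/|h|)$ and retain only the size of $\omega_h$, the main term of $\sum_m d(m)d(m+h)\omega_h(m)$ is of order $(\log V)^2\,\sigma_{-1}(h)\min(U, V/|h|)$ per shift, and summing over $1 \le |h| \ll V$ gives $\asymp V(\log V)^3$, which exceeds the target $(U+V^{2/3})^{1+\varepsilon}$ whenever $U = o(V)$. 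The point is that you cannot take absolute values of the $u$-integral before confronting these main terms: the factor $(m/(m+h))^{iu}$ oscillates in $m$ with phase derivative $\asymp |h|/V$, and integrating the smooth main-term density of the additive divisor problem against it (non-stationary phase) annihilates all shifts with $|h| \gg V^{\varepsilon}$, leaving a contribution $\ll U(\log V)^{O(1)}$, which is acceptable. This is exactly the species of ``off-diagonal main term'' difficulty that the present paper identifies as its own chief technical obstacle in Section \ref{section:ODMT}, so it should not be waved through.

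A second, lesser issue: the $V^{2/3+\varepsilon}$ error term in the shifted divisor problem is not uniform over arbitrary weights; it degrades with the derivatives of $\omega_h$, which here oscillate on the scale $V/|h|$ (compare the parameter $R = P + T|m|/Y$ and the $R$-dependence of the error term in Proposition \ref{prop:SCS}). To sum an $h$-by-$h$ estimate over all $|h| \ll V$ you must track this dependence explicitly; alternatively one works with Motohashi's spectral decomposition of the full fourth moment, or follows Iwaniec's original arrangement, rather than invoking the additive divisor asymptotic as a black box.
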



\subsection{Mellin transform of \texorpdfstring{$\psi$}{psi}}
Many of our estimates are naturally given in terms of the Mellin transform of $\psi$.  We gather here some simple estimates on $\widetilde{\psi}$.  We require some control on $\widetilde{\psi}(s)$ as $|s|\to \infty$ while $s$ is in the strip $-3<\mathrm{Re}(s) <3$. 
By integration by parts, we have
\begin{equation}
\label{eq:psiMellinIntegrationByPartsFormula}
 \widetilde{\psi}(s) = \int_0^{\infty} \psi(y) y^s \frac{dy}{y} = \frac{(-1)^j}{s(s+1)\dots(s+j-1)} \int_0^{\infty} \psi^{(j)}(y) y^{s+j} \frac{dy}{y}.
\end{equation}
Using \eqref{eq:psibound}, and recalling that $\psi$ is supported on $[1,3]$, we have
\begin{equation*}
 \widetilde{\psi}(s) \ll \frac{A^j }{|s(s+1)\dots(s+j-1)|}.
\end{equation*}
Here the implied constant depends only on $j$ and the implied constants appearing in \eqref{eq:psibound}.
 Taking $j$ large if $|\text{Im}(s)| \geq A$, and $j=0$ otherwise, we derive
\begin{equation}
\label{eq:psiMellinbound}
 \widetilde{\psi}(\sigma + it) \ll_{C} \left(1 + \frac{|t|}{A}\right)^{-C},
\end{equation}
where $C > 0$ may be chosen arbitrarily large, and the result is uniform in $-3<\sigma<3$.

It will also be convenient to mention that
\begin{equation}
\label{eq:psiMellinSquaredIntegralBound}
\frac{1}{2\pi} \int_{-\infty}^{\infty} |\widetilde{\psi}(\sigma + it)|^2 dt = \int_0^{\infty} y^{2\sigma} \psi(y)^2 \frac{dy}{y} \ll \int_0^{\infty} \psi(y)^2 \frac{dy}{y}.
\end{equation}
By a similar calculation, using \eqref{eq:psiMellinIntegrationByPartsFormula} with $j=1$, and Parseval, we have
\begin{equation}
\label{eq:psiprimeMellinSquaredIntegralBound}
 \frac{1}{2 \pi} \int_{-\infty}^{\infty} |(\sigma+it) \widetilde{\psi}(\sigma+it)|^2 dt = \int_0^{\infty} (y \psi'(y))^2 y^{2 \sigma} \frac{dy}{y} \ll \int_0^{\infty} \psi'(y)^2 \frac{dy}{y}.
\end{equation}
We may derive the following estimate:
\begin{equation}
\label{eq:psiMellinL1norm}
 \int_{-\infty}^{\infty} |\widetilde{\psi}(\sigma-iu)| du \ll A^{1/2}  \| \psi \|_2.
\end{equation}
\begin{proof}[Proof of \eqref{eq:psiMellinL1norm}]
By Cauchy-Schwarz and \eqref{eq:psiMellinSquaredIntegralBound}, we derive
\begin{equation}
\label{eq:psimellinL1boundSmalluRange}
 \int_{|u| \leq A} |\widetilde{\psi}(\sigma-iu)| du \ll A^{1/2} \left(\int_0^{\infty} y^{2 \sigma} \psi(y)^2 \frac{dy}{y} \right)^{1/2} \ll A^{1/2} \| \psi\|_2.
\end{equation}
For the part of the integral with $|u| \geq A$ we multiply and divide by $|u|/A$, apply Cauchy-Schwarz again, and use \eqref{eq:psiprimeMellinSquaredIntegralBound} as follows:
\begin{gather*}
 \int_{|u| \geq A} |\widetilde{\psi}(\sigma-iu)| du
 = \int_{|u| \geq A} \frac{|u|}{A}  |\widetilde{\psi}(\sigma-iu)| \frac{A}{|u|} du
 \ll A^{1/2} \left(\int_{|u| \geq A} \frac{|u|^2}{A^2} |\widetilde{\psi}(\sigma-iu)|^2 du \right)^{1/2}
 \\
 \leq A^{1/2} \left(\int_{-\infty}^{\infty} \frac{|\sigma+iu|^2}{A^2} |\widetilde{\psi}(\sigma+iu)|^2 du \right)^{1/2} \ll A^{1/2}  \left(A^{-2} \int_0^{\infty} \psi'(y)^2 \frac{dy}{y} \right)^{1/2}.
\end{gather*}
Using \eqref{eq:psiprimeandpsisquaredrelation} completes the proof.
\end{proof}

The main technical tool we need in this work is an estimate for a shifted divisor sum, which we directly quote as follows.
\begin{proposition}[\cite{Young}, Theorem 7.1]
\label{prop:SCS}
Suppose that $w(x)$ is a smooth function on the positive reals supported on $Y \leq x \leq  2Y$ and satisfying $w^{(j)}(x) \ll_j (P/Y)^j$ for some parameters $1 \leq P \leq Y$.  Let $\theta = 7/64$, and set $R = P + \frac{T|m|}{Y}$.  Then for $m \neq 0$, $ R \ll T/(TY)^{\delta}$, we have
\begin{equation*}
 \sum_{n \in \mathbb{Z}} \tau_{iT}(n) \tau_{iT}(n+m) w(n) = M.T. +  E.T.,
\end{equation*}
where
\begin{equation*}
 M.T. = \sum_{\pm}  \frac{|\zeta(1 + 2iT)|^2}{\zeta(2)} \sigma_{-1}(m) \int_{\max(0, -m)}^{\infty} (x + m)^{\mp iT} x^{\pm iT} w(x) dx,
\end{equation*}
and
\begin{equation*}
 E.T.\ll 
(|m|^{\theta} T^{\frac13} Y^{\frac12} R^2 
+
T^{\frac16} Y^{\frac34}  R^{\frac12} ) (TY)^{\varepsilon} .
\end{equation*}
Furthermore, with $R = P + \frac{TM}{Y}$, we have
\begin{equation*}
 \sum_{1 \leq |m| \leq M} |E.T.| \ll (M T^{\frac13} Y^{\frac12} R^2 
+
M T^{\frac16} Y^{\frac34}  R^{\frac12} ) (TY)^{\varepsilon}.
\end{equation*}
\end{proposition}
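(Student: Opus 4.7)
The plan is to view the sum as a shifted convolution problem for the divisor function $\tau_{iT}$ and to analyze it by spectral theory on $SL_2(\mathbb{Z})\backslash \mathbb{H}$. First I would write $\tau_{iT}(n)=\sum_{ab=|n|}(a/b)^{iT}$ in both factors, producing a quadruple sum in $(a,b,c,d)$ with the constraint $ab+m=cd$. Then I would hit the $n$-variable with Voronoi summation for the twisted divisor function, converting the inner sum $\sum_n \tau_{iT}(n+m)w(n)$ into a dual sum of Kloosterman sums $S(m,\cdot;q)$ weighted by a Bessel-type transform $\widetilde{w}$ of the cutoff $w$. The outer $\tau_{iT}$-factor combined with the $q$-sum puts the problem in a form that can be attacked spectrally.

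Next I would feed the resulting Kloosterman sums into the Kuznetsov trace formula, decomposing spectrally over the Maass cusp forms, the holomorphic forms, and the Eisenstein continuous spectrum of $SL_2(\mathbb{Z})$. The claimed main term $M.T.$ should emerge from the Eisenstein/continuous-spectrum contribution, reflecting the pole of the Rankin--Selberg integral at $s=1$ and producing the factor $|\zeta(1+2iT)|^2/\zeta(2)$ together with the arithmetic factor $\sigma_{-1}(m)$. What remains is a spectral sum over cuspidal Maass forms $u_j$ weighted by the Hecke eigenvalue $\lambda_j(m)$ and by the Bessel transform $\widehat{w}(t_j)$ of $w$ with respect to the Kuznetsov test kernel, plus an analogous Eisenstein contribution to be treated by the Vinogradov--Korobov bound on $\zeta$ and the Iwaniec fourth-moment estimate (Proposition~\ref{prop:Iwaniec}).

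To bound this error I would split the spectral parameter $t_j$ into dyadic ranges. A stationary-phase analysis of $\widehat{w}(t_j)$, using $w^{(j)} \ll (P/Y)^j$ and the fact that the Bessel phase has a stationary point only when $|t_j| \asymp T|m|/Y$, localizes the essential support to $|t_j| \lesssim R = P + T|m|/Y$, with rapid decay outside. Within this range, Cauchy--Schwarz combined with the spectral large sieve $\sum_{|t_j| \leq R} |\lambda_j(m)|^2 \ll (R^2 + |m|) (R|m|)^\varepsilon$ and an $L^2$-bound on $\widehat{w}$ yields the term $T^{1/6} Y^{3/4} R^{1/2} (TY)^\varepsilon$; pulling the Kim--Sarnak bound $|\lambda_j(m)| \ll |m|^{\theta}$ out pointwise and using an $L^1$-bound on $\widehat{w}$ on the other side produces the term $|m|^{\theta} T^{1/3} Y^{1/2} R^2 (TY)^\varepsilon$. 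The uniform bound in $m$ then sums trivially to give the final assertion for $\sum_{1\leq|m|\leq M}|E.T.|$.

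The hardest step, and what makes the precise exponents work, is the uniform analysis of the Bessel transform $\widehat{w}(t_j)$ and of the accompanying $K_{iT}$-integrals in \emph{all} parameter ranges. One must control integrals of $K_{iT}$ against $J_{2it_j}$ (and hyperbolic cousins) with two independent complex indices of potentially comparable size, and the stationary-phase picture degenerates at the transition $R \asymp T$ where the phases become nearly flat; this is precisely why the hypothesis $R \ll T/(TY)^{\delta}$ is imposed. Executing this asymptotic analysis with enough precision to extract a clean main term from the continuous spectrum while keeping the cuspidal error in the stated form is the principal technical obstacle.
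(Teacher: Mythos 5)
This proposition is imported verbatim from \cite{Young}, Theorem~7.1; the paper under review gives no proof of it (the surrounding text says explicitly that the result is ``directly quoted''), so there is no ``paper's own proof'' here to compare you against. What you have written is a high-level road map for how shifted divisor-correlation estimates of this type are usually obtained, and the major ingredients you name --- Voronoi summation to produce Kloosterman sums, the Kuznetsov trace formula, the spectral large sieve, the Kim--Sarnak bound explaining the exponent $\theta=7/64$, and the localization of the test function's Bessel transform to spectral parameters of size $\asymp R$ --- are indeed the right pieces. As a roadmap it is consistent in spirit with the machinery used in \cite{Young}.

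However, there is a substantive misstep in your account of where the main term comes from. You attribute $M.T.$, with its factor $\sigma_{-1}(m)|\zeta(1+2iT)|^2/\zeta(2)$, to ``the Eisenstein/continuous-spectrum contribution'' in Kuznetsov. In arguments of this type the main term does not arise from the continuous spectrum of the trace formula; it arises \emph{before} Kuznetsov is invoked, from the zero-frequency (Ramanujan-sum) term produced by Voronoi summation over the modulus, whose sum over moduli evaluates to $\sigma_{-1}(m)$. The Eisenstein part of the Kuznetsov decomposition contributes to the \emph{error} term alongside the Maass spectrum, and has to be estimated (e.g., by fourth-moment bounds for $\zeta$), not extracted as a main term. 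Your opening reduction is also vaguer than what is actually needed: writing both $\tau_{iT}$ as $\sum_{ab=|n|}(a/b)^{iT}$ to get a quadruple sum with constraint $ab+m=cd$ does not by itself set up a clean Voronoi step; the standard move is to open only one of the divisor functions, reducing to a sum of $\tau_{iT}(n)$ over a progression $n\equiv -m \pmod{c}$, and apply Voronoi there. Finally, the hardest step you correctly flag --- controlling the Bessel/$K_{iT}$-transforms uniformly in all ranges and extracting the stated exponents $T^{1/3}Y^{1/2}R^2$ and $T^{1/6}Y^{3/4}R^{1/2}$ --- is exactly what the cited Theorem~7.1 of \cite{Young} carries out, and your sketch does not do that work. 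So the proposal should be read as an informed outline of the method, not as an independent proof of the proposition.
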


\section{Initial developments}
We have by Parseval that
\begin{equation}
\label{eq:Ialphadef}
 I_{\psi,\alpha}(T) = \frac{1}{2\pi} \int_{-\infty}^{\infty} F(-it) \left(\int_0^{\infty} E_T^*\left(i\left(1+\frac{\alpha}{T}\right)y\right) \psi\left(\left(1+\frac{\alpha}{T}\right)y\right) y^{it} \frac{dy}{y} \right) dt,
\end{equation}
where
\begin{equation}
\label{eq:Fdef}
 F(s) = \int_0^{\infty} \psi(y) E_T^*(iy) y^{s} \frac{dy}{y}.
\end{equation}
In writing this, we have assumed that $T$ is large.

We will calculate $F(s)$ here, and then perform some easy approximations.  Inserting \eqref{eq:ETFourier} into \eqref{eq:Fdef}, using \eqref{eq:gammaVformula} and the Mellin inversion formula for $\psi(y) = \frac{1}{2 \pi i} \int_{(1)} \widetilde{\psi}(-u) y^{u} du$, we obtain
\begin{multline}
\label{eq:Fformula1}
 F(s) = \mu \widetilde{\psi}(1/2 + s + iT) + \overline{\mu}
\widetilde{\psi}(1/2 + s - iT)
\\
+ 2\rho^*(1)\sum_{n \geq 1} \frac{\tau_{iT}(n)
}{n^{1/2 + s}} \frac{1}{2 \pi i} \int_{(\nu)} \widetilde{\psi}(-u) n^{-u}
\gamma_{V_T}(1/2 + s+u) du,
\end{multline}
where $\nu + \text{Re}(s) > 1/2$. Here and elsewhere, we denote by $\int_{(\nu)} f(u) du$ the contour integration along the contour $\{\nu+it \in \mathbb{C}:-\infty<t<+\infty\}$, i.e.,
\[
\int_{(\nu)} f(u) du=\int_{-\infty}^\infty f(\nu+it) dt.
\]
Let $T_0 = T^{1-\eta}$ where $\eta > 0$ is small.

\begin{lemma}[Trivial bound beyond the transition range]
\label{lemma:FboundTrivial}
 For $|t| \geq T + T_0$, and $-3<\sigma <3$, we have
 \begin{equation*}
  F(\sigma+it) \ll  (|t|T)^{-100}.
 \end{equation*}
\end{lemma}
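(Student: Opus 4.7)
The plan is to split $F(\sigma+it)$ according to \eqref{eq:Fformula1} into the two ``constant-term'' pieces $\mu\widetilde\psi(\tfrac12+s\pm iT)$ and the Fourier--Bessel sum, and show that when $|t|\geq T+T_0$ each piece is super-polynomially small in $T$. Throughout, we fix $\eta$ small (say $\eta=1/10$), so that with $A\ll T^\delta$, $\delta<1/51$, we have $T_0/A\gg T^{1-\eta-\delta}$, a positive power of $T$.

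\textbf{Constant terms.} The hypothesis $|t|\geq T+T_0$ forces both $|t+T|$ and $|t-T|$ to be $\geq T_0$. Applying \eqref{eq:psiMellinbound} with sufficiently large $C$ then yields $\widetilde\psi(\tfrac12+s\pm iT)\ll (1+T_0/A)^{-C}\ll(|t|T)^{-100}$.

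\textbf{Fourier--Bessel sum.} Shift the contour in the $u$-integral to $\mathrm{Re}(u)=\nu$ with $\nu$ large enough that $\tfrac12+\sigma+\nu>1$ (e.g.\ $\nu=4$), so that $\sum_{n\geq 1}|\tau_{iT}(n)|n^{-1/2-\sigma-\nu}\ll 1$ by the trivial divisor bound. It then suffices to show uniform super-polynomial decay in $n$ of
\[
I_n := \rho^*(1)\int_{-\infty}^{\infty} \widetilde\psi(-\nu-iv)\, n^{-iv}\, \gamma_{V_T}\bigl(\tfrac12+\sigma+\nu+i(t+v)\bigr)\,dv.
\]
Split the $v$-integral at $|v|=T_0/2$. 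In the outer range, \eqref{eq:psiMellinbound} with arbitrary $C$ gives $|\widetilde\psi(-\nu-iv)|\ll (|v|/A)^{-C}$, which, together with any polynomial Stirling bound on the remaining factors, contributes $\ll (|t|T)^{-100}$ after choosing $C$ large.

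\textbf{Inner range.} For $|v|\leq T_0/2$ we have $|t+v|\geq T+T_0/2$. Stirling's formula applied to the two Gamma factors in \eqref{eq:gammaVformula} gives
\[
\Bigl|\Gamma\Bigl(\tfrac{1/2+\sigma+\nu+i(t+v\pm T)}{2}\Bigr)\Bigr|\ll (1+|t+v\pm T|)^{(\sigma+\nu-1)/2}e^{-\pi|t+v\pm T|/4},
\]
and since $|t+v|\geq T$ the exponents add to $|t+v+T|+|t+v-T|=2|t+v|\geq 2T+T_0$, yielding
\[
|\gamma_{V_T}(\tfrac12+\sigma+\nu+i(t+v))|\ll T^{O(1)}e^{-\pi T/2}e^{-\pi T_0/4}.
\]
On the other hand, $|\theta(1/2+iT)|=\pi^{-1/2}|\Gamma(1/2+iT)||\zeta(1+2iT)|$, and $|\Gamma(1/2+iT)|^2=\pi/\cosh(\pi T)$ together with the classical lower bound $|\zeta(1+2iT)|^{-1}\ll(\log T)^{O(1)}$ give $\rho^*(1)\ll e^{\pi T/2}(\log T)^{O(1)}$. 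Therefore $\rho^*(1)|\gamma_{V_T}(\cdots)|\ll T^{O(1)}e^{-\pi T_0/4}$, which beats any fixed polynomial in $(|t|T)^{-1}$ once $T$ is large. Combining with \eqref{eq:psiMellinL1norm} to integrate over the inner $v$-range and the absolutely convergent sum over $n$ gives $F(\sigma+it)\ll (|t|T)^{-100}$, completing the plan.

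The only mildly delicate step is bookkeeping the Stirling asymptotic for the product of Gamma factors in $\gamma_{V_T}$, since one must track that the cancellation between $\rho^*(1)$ and $\gamma_{V_T}$ leaves behind the residual exponential $e^{-\pi T_0/4}$; but since the final estimate is exponential against merely polynomial factors, no sharp control is required.
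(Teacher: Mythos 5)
Your argument is correct and follows essentially the same route as the paper: you handle the constant terms via the rapid decay of $\widetilde\psi$ from \eqref{eq:psiMellinbound} using $|t\pm T|\geq T_0\gg A$, and for the Fourier--Bessel sum you play off the Stirling exponential $\exp(-\tfrac{\pi}{4}(|t+v+T|+|t+v-T|))=\exp(-\tfrac{\pi}{2}|t+v|)$ against $\rho^*(1)\asymp T^{o(1)}e^{\pi T/2}$, which is exactly the paper's mechanism for producing net exponential decay once $|t|\geq T+T_0$. The only cosmetic differences are your choice of the contour $\nu=4$ (the paper uses $\mathrm{Re}(u)=1-\sigma$) and the split point $T_0/2$ versus the paper's $AT^\varepsilon$, neither of which affects the substance.
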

\begin{proof}
 Using \eqref{eq:psiMellinbound} and the fact that $A$ is a small power of $T$ gives a satisfactory bound from the constant terms of $F(\sigma+it)$.  For the non-constant terms, we will have need of Stirling's bound
 \begin{equation}
 \label{eq:gammaVTStirlingBound}
  |\gamma_{V_T}(1/2+a + iv)| \ll_a (1 + |v+T|)^{\frac{a}{2}-\frac14}
  (1 + |v-T|)^{\frac{a}{2}-\frac14} \exp(-\tfrac{\pi}{4} (|v+T| + |v-T|)).
 \end{equation}
Letting $\text{Re}(u) = 1 - \sigma$, and using $\rho^*(1) = T^{o(1)} \exp(\frac{\pi T}{2})$, one sees that the exponential factor (including $\rho^*(1)$ and those from the gamma factors) is strictly negative for $v$ slightly beyond $T$.  Using \eqref{eq:psiMellinbound}, we may truncate the $u$-integral at $A T^{\varepsilon}$, which is much smaller than $T_0$, whence $|\text{Im}(s+u) \pm T| \geq T^{1/2}$, say, and so the exponential decay easily leads to a sufficient bound.
\end{proof}

Next we record another pointwise bound for $F$.
\begin{lemma}[Subconvexity-type pointwise bound for $F(it)$]
\label{lemma:FboundSubconvexity}
 Suppose $|t| \leq T - T_0$, and $-3<\sigma <3$.  Then
 \begin{equation*}
  F(\sigma + it) \ll (T T_0)^{-\frac{1}{12}+\varepsilon} A^{1/2}  \| \psi \|_2.
 \end{equation*}
\end{lemma}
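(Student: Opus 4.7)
The plan is to write $F(s)$ as a contour integral of a product of zeta functions, and then bound the integrand pointwise by combining Stirling's formula for $\rho^*(1)\gamma_{V_T}$ with a subconvex bound for $\zeta(\tfrac12 + \cdot)$. The exponent $-1/12$ will emerge as $-1/4 + 1/6$: the $-1/4$ comes from Stirling, the $1/6$ from Weyl-type subconvexity.

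First, I would swap the sum and integral in the third term of \eqref{eq:Fformula1}, using the Dirichlet series identity $\sum_{n\geq 1}\tau_{iT}(n) n^{-s'} = \zeta(s'-iT)\zeta(s'+iT)$ valid for $\mathrm{Re}(s')>1$, to obtain, for $\nu + \mathrm{Re}(s) > 1$,
\begin{equation*}
F(s) = \mu\widetilde\psi(\tfrac12+s+iT) + \overline\mu\widetilde\psi(\tfrac12+s-iT) + \frac{2\rho^*(1)}{2\pi i}\int_{(\nu)} \widetilde\psi(-u)\gamma_{V_T}(\tfrac12+s+u)\zeta(\tfrac12+s+u-iT)\zeta(\tfrac12+s+u+iT)\,du.
\end{equation*}
I would then shift the contour to $\nu = -\sigma$, so both zeta arguments lie on the critical line. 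The only poles crossed are the two simple ones at $u = \tfrac12-s \pm iT$, and each residue carries a factor of the form $\widetilde\psi(\sigma-\tfrac12 + i(t \mp T))$ whose imaginary part has absolute value at least $T_0$ under $|t| \leq T - T_0$. Since $A \ll T^\delta$ is much smaller than $T_0 = T^{1-\eta}$, these residues, as well as the two constant terms of \eqref{eq:Fformula1}, are super-polynomially small by \eqref{eq:psiMellinbound}.

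On the shifted contour, setting $u = -\sigma + iw$ and $\tau = t+w$, two pointwise ingredients combine. Stirling applied to $\Gamma(\tfrac12+iT)$ inside $\rho^*(1)$ and to the two gamma factors defining $\gamma_{V_T}(\tfrac12+i\tau)$ produces an exact cancellation of the $e^{\pi T/2}$ factor, yielding
\begin{equation*}
\rho^*(1)\,|\gamma_{V_T}(\tfrac12+i\tau)| \ll (T^2-\tau^2)^{-1/4}\,T^\varepsilon,\qquad |\tau| \leq T - T_0/2,
\end{equation*}
where $T^\varepsilon$ absorbs the bound $|\zeta(1+2iT)|^{-1} \ll (\log T)^{2/3+\varepsilon}$ from Vinogradov--Korobov. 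Independently, Weyl's subconvexity $\zeta(\tfrac12+iu) \ll (1+|u|)^{1/6+\varepsilon}$ gives
\begin{equation*}
|\zeta(\tfrac12+i(\tau-T))\,\zeta(\tfrac12+i(\tau+T))| \ll (T^2-\tau^2)^{1/6+\varepsilon}.
\end{equation*}
Multiplying, the integrand is $\ll (T^2-\tau^2)^{-1/12+\varepsilon}\,|\widetilde\psi(\sigma-iw)|$.

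Finally, \eqref{eq:psiMellinbound} lets me truncate the integral to $|w| \leq A^{1+\varepsilon}$ at negligible cost; in that range $|\tau| = |t+w| \leq T - T_0 + A^{1+\varepsilon} \leq T - T_0/2$, so $T^2 - \tau^2 \gg TT_0$. The $L^1$ bound \eqref{eq:psiMellinL1norm} then gives
\begin{equation*}
|F(\sigma+it)| \ll (TT_0)^{-1/12+\varepsilon}\int_{-\infty}^{\infty}|\widetilde\psi(\sigma-iw)|\,dw \ll (TT_0)^{-1/12+\varepsilon}A^{1/2}\|\psi\|_2,
\end{equation*}
as claimed. The main obstacles will be tracking the $e^{\pi T/2}$ cancellation in Stirling precisely across the full range $|\tau| \leq T-T_0/2$, and justifying the contour shift (verifying decay on horizontal segments and that only the two displayed zeta poles are crossed); once these are in hand, the Stirling-plus-subconvexity synthesis yields the $-1/12$ exponent mechanically.
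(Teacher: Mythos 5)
Your proposal is correct and follows essentially the same route as the paper: expand $F$ via its Mellin formula, shift the contour so the zeta arguments sit on the critical line, observe that the constant terms and the two crossed residues are negligible by the decay of $\widetilde\psi$ (since $|t\mp T|\geq T_0\gg A$), then bound the integrand pointwise by Stirling (giving the $-1/4$ exponent) times Weyl subconvexity (giving $+1/6$), truncate to $|\mathrm{Im}(u)|\ll AT^\varepsilon$, and finish with the $L^1$-type bound \eqref{eq:psiMellinL1norm}/\eqref{eq:psimellinL1boundSmalluRange} on $\widetilde\psi$. The only cosmetic difference is the parametrization of the shifted contour ($\mathrm{Re}(u)=-\sigma$ versus the paper's $\nu=0$ after a change of variable), which is immaterial.
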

\begin{proof}
 The constant term part of $F$ gives  $|\widetilde{\psi}(1/2 + \sigma+ it \pm i T)| \ll (A/T_0)^{100}$, which is acceptable.

 The non-constant terms give to $F(\sigma+it)$ the amount
 \begin{equation*}
   \frac{2 \rho^*(1)}{2 \pi i} \int_{(1)} \widetilde{\psi}(\sigma-u)
\gamma_{V_T}(1/2 + it+u) \zeta(1/2+it+u+iT) \zeta(1/2+it+u-iT) du.
 \end{equation*}
We move the contour of integration to $\nu = 0$, crossing poles at $u = 1/2 - it \mp iT$.  By \eqref{eq:psiMellinbound}, the residues at these points are small, since $|t\mp T| \geq T_0$, which is large compared to $A$.  The integral on the new line may be truncated at $|\text{Im}(u)| \ll A T^{\varepsilon}$, whence we obtain a bound
\begin{multline*}
 T^{\varepsilon} \int_{|v| \ll A T^{\varepsilon}} |\widetilde{\psi}(\sigma-iv)| \frac{|\zeta(1/2+it+iv+iT)|}{(1 + |t + v + T|)^{1/4} } \frac{|\zeta(1/2+it+iv-iT)|}{(1 + |t + v - T|)^{1/4} } dv
 \\
 \ll (T T_0)^{-\frac{1}{12} + \varepsilon} \int_{|v| \leq A T^{\varepsilon}} |\widetilde{\psi}(\sigma-iv)| dv,
\end{multline*}
using Weyl's subconvexity bound for $\zeta$.
The $v$-integral is estimated with \eqref{eq:psimellinL1boundSmalluRange}.
\end{proof}

Now we return to \eqref{eq:Ialphadef}.  Using a ``trivial" sup norm bound on the Eisenstein series in the form
\begin{equation}\label{eq:triv}
\sup_{z \in K} |E_T^*(z)| \ll_K T^{1/2+\varepsilon},
\end{equation}
where $K$ is a compact subset of $\mathbb{H}$, we have an easy trivial bound
\begin{equation*}
\int_0^{\infty} E_T^*\left(i\left(1+\frac{\alpha}{T}\right)y\right) \psi\left(\left(1+\frac{\alpha}{T}\right)y\right) y^{it} \frac{dy}{y} \ll \|\psi \|_1 T^{1/2 + \varepsilon}.
\end{equation*}
Combining this with Lemma \ref{lemma:FboundTrivial}, we derive that
\begin{equation*}
 I_{\psi,\alpha}(T) = \frac{1}{2\pi} \int_{|t| \leq T + T_0} F(-it) \left(\int_0^{\infty} E_T^*\left(i\left(1+\frac{\alpha}{T}\right)y\right) \psi\left(\left(1+\frac{\alpha}{T}\right)y\right) y^{it} \frac{dy}{y} \right) dt + O(T^{-10}).
\end{equation*}
In the inner $y$-integral, we change variables $y \rightarrow \left(1+\frac{\alpha}{T}\right)^{-1}y$, giving
\begin{align}
\label{eq:IpsialphaPreBinomialTheorem}
 I_{\psi,\alpha}(T) &= \frac{1}{2\pi} \int_{|t| \leq T + T_0} F(-it) \left(\int_0^{\infty} \left(1+\frac{\alpha}{T}\right)^{-it} E_T^*(iy) \psi(y) y^{it}  \frac{dy}{y} \right) dt + O(T^{-10})\notag \\
 &=\frac{1}{2\pi} \int_{|t| \leq T + T_0} \left(1+\frac{\alpha}{T}\right)^{-it}F(-it)F(it) dt + O(T^{-10}).
\end{align}

Next we state
\begin{lemma}[Bounding $t$ near $T_0$ with a fourth moment of $\zeta$]
\label{lemma:degeneraterange}
 We have
 \begin{equation*}
  \int_{|t \mp T| \leq T_0} |F(it)|^2 dt \ll  \|\psi^2 \|_1 + T^{\varepsilon} A \left(\frac{T_0^{1/2}}{T^{1/2}} + T^{-1/6} \right)  \|\psi^2\|_1.
 \end{equation*}
\end{lemma}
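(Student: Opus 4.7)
The plan is to split $F(it)$ from \eqref{eq:Fformula1} into three pieces whose $L^2$ contributions on $|t-T|\leq T_0$ are estimated separately (the case $|t+T|\leq T_0$ is symmetric): (i) the two constant-term contributions $\mu\widetilde\psi(1/2+i(t+T))+\overline\mu\widetilde\psi(1/2+i(t-T))$, (ii) a residue $G_{\mathrm{res}}(it)$ picked up when the $u$-contour in the Dirichlet-series term is shifted from $\mathrm{Re}(u)=1$ to $\mathrm{Re}(u)=0$, and (iii) the remaining contour integral $G_{\mathrm{new}}(it)$. The first constant term is negligible because $|t+T|\gg A$ renders \eqref{eq:psiMellinbound} very small, while the second gives $\int |\widetilde\psi(1/2+i(t-T))|^2\,dt\ll \|\psi^2\|_1$ by Parseval \eqref{eq:psiMellinSquaredIntegralBound}. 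The companion pole at $u=1/2-i(t+T)$ produces a residue killed by the same rapid decay.

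For the residue piece, unravelling the definitions of $\rho^*(1)$ and $\gamma_{V_T}$ via $\theta(1/2+iT)=\pi^{-1/2-iT}\Gamma(1/2+iT)\zeta(1+2iT)$ yields the clean identity $|2\rho^*(1)\gamma_{V_T}(1+iT)\zeta(1+2iT)|=1$, hence $|G_{\mathrm{res}}(it)|=|\widetilde\psi(-1/2+i(t-T))|$ and Parseval again controls this by $\|\psi^2\|_1$. Parametrising $u=iv$ and substituting $s=t+v$ turns the new integral into the convolution-like form
\begin{equation*}
G_{\mathrm{new}}(it)=\frac{\rho^*(1)}{\pi}\int \widetilde\psi(-i(s-t))\,H(s)\,ds,\qquad H(s)=\gamma_{V_T}(1/2+is)\,\zeta(1/2+i(s+T))\,\zeta(1/2+i(s-T)).
\end{equation*}
Applying $|\int fg|^2\leq (\int|f|)(\int|f||g|^2)$ with $f=\widetilde\psi(-i(s-t))$, using \eqref{eq:psiMellinL1norm} to get $\int|f|\,ds\ll A^{1/2}\|\psi\|_2$, and integrating in $t$ (the localization of $\widetilde\psi$ to $|s-t|\ll AT^\varepsilon$ confines $s$ to $|s-T|\leq T_0+AT^\varepsilon=:U$) yields
\begin{equation*}
\int_{|t-T|\leq T_0}|G_{\mathrm{new}}|^2\,dt\ll (\rho^*(1))^2 A\|\psi^2\|_1 \int_{|r|\leq U} |\gamma_{V_T}(1/2+i(T+r))|^2\,|\zeta(1/2+i(2T+r))|^2\,|\zeta(1/2+ir)|^2\,dr.
\end{equation*}

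The decisive step is the bound on this last integral, for which a second Cauchy--Schwarz in $r$ separates $|\zeta(1/2+i(2T+r))|^2$ from the rest. Proposition \ref{prop:Iwaniec} with $V\sim 2T$ gives $\int_{|r|\leq U}|\zeta(1/2+i(2T+r))|^4\,dr\ll (U+T^{2/3})^{1+\varepsilon}$; for the companion factor $\int |\gamma_{V_T}(1/2+i(T+r))|^4|\zeta(1/2+ir)|^4\,dr$, Stirling bounds $|\gamma_{V_T}(1/2+i(T+r))|^4\ll T^{-1}(1+|r|)^{-1}e^{-2\pi T}$, and a dyadic application of Proposition \ref{prop:Iwaniec} at $V=2^k$ gives $\int (1+|r|)^{-1}|\zeta(1/2+ir)|^4\,dr\ll T^\varepsilon$, so this factor is $\ll T^{-1+\varepsilon}e^{-2\pi T}$. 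Multiplying the square roots and absorbing $(\rho^*(1))^2 e^{-\pi T}\ll T^\varepsilon$ (via Vinogradov--Korobov to handle $|\zeta(1+2iT)|^{-1}$) gives $T^\varepsilon A\|\psi^2\|_1\,T^{-1/2}(U+T^{2/3})^{1/2}\ll T^\varepsilon A\|\psi^2\|_1(T_0^{1/2}/T^{1/2}+T^{-1/6})$, after absorbing $A^{1/2}/T^{1/2}$ into $T^{-1/6}$ using $A\ll T^{1/51}$. The hardest part is precisely to recover the $T_0^{1/2}/T^{1/2}$ factor rather than the weaker $T_0^{1/2}T^{-1/6}$ that the pointwise Weyl subconvex bound $|\zeta(1/2+2iT)|^2\ll T^{1/3+\varepsilon}$ would produce; this forces the use of Iwaniec's fourth-moment estimate together with Cauchy--Schwarz to decouple the two zeta factors.
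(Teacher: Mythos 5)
Your proof is correct and follows essentially the same strategy as the paper: Parseval handles the constant-term and residue contributions, and the fourth-moment bound of Iwaniec handles the remaining contour integral. The only organizational difference is that the paper squares out to obtain a double $u_1,u_2$-integral and applies H\"older with exponents $(4,4,4,4)$ to the four zeta factors, whereas you keep a single convolution variable, first apply $|\int fg|^2\leq(\int|f|)(\int|f||g|^2)$ to peel off the $\widetilde\psi$ weight (yielding the same $A\|\psi^2\|_1$ factor via \eqref{eq:psiMellinL1norm}), and then apply Cauchy--Schwarz in $r$ to decouple the high-height and low-height zeta squares; the resulting integrals are exactly the paper's $H_1$ and $H_2$ up to the $e^{-\pi T}$ normalization that you absorb into $|\rho^*(1)|^2$. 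Your explicit identity $|2\rho^*(1)\gamma_{V_T}(1+iT)\zeta(1+2iT)|=1$ for the residue is a nice clean rendering of the paper's ``short calculation,'' and the rest of the estimates (Stirling for $\gamma_{V_T}$, dyadic Iwaniec for the low-height fourth moment, Vinogradov--Korobov for $|\zeta(1+2iT)|^{-1}$) match the paper's.
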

\begin{proof}
We treat the region $|t-T| \leq T_0$, since the other sign follows by symmetry.
The constant terms of $F$ satisfy
\begin{equation}
\label{eq:constanttermbound}
\frac{1}{2\pi} \int_{-\infty}^{\infty} |\widetilde{\psi}(1/2+  it + iT)|^2 dt = \int_0^{\infty} \psi(y)^2 \frac{dy}{y} =  \| \psi^2 \|_1,
\end{equation}
which is an acceptable error term.
For the non-constant terms, we have
\begin{multline*}
   \sum_{n \geq 1} \frac{\tau_{iT}(n)
}{n^{1/2 + it}} \frac{1}{2 \pi i} \int_{(1)} \widetilde{\psi}(-u) n^{-u}
\gamma_{V_T}(1/2 + it+u) du
\\=
\frac{1}{2 \pi i} \int_{(1)} \widetilde{\psi}(-u)
\zeta(1/2 + it + u +iT) \zeta(1/2+it+u-iT)
\gamma_{V_T}(1/2 + it+u) du.
\end{multline*}
We move the contour to the $0$-line, which crosses poles at $u+it = 1/2 \mp iT$.  A short calculation shows that these poles contribute a bound of the type \eqref{eq:constanttermbound}.  Therefore, we have
\begin{equation*}
 \int_{|t-T| \leq T_0} |F(it)|^2 dt \ll Q+  \| \psi^2 \|_1,
\end{equation*}
where
\begin{equation*}
 Q = \int_{|t-T| \leq T_0} |\rho(1)|^2 \Big| \int_{\mathbb{R}} \widetilde{\psi}(-iu)
\zeta(1/2 + it + iu +iT) \zeta(1/2+it+iu-iT)
\gamma_{V_T}(1/2 + it+iu) du \Big|^2 dt.
\end{equation*}
By squaring this out, and using Stirling, we derive
\begin{multline*}
 Q\ll \frac{1}{|\zeta(1+2iT)|^2} \int_{\mathbb{R}} \int_{\mathbb{R}} |\widetilde{\psi}(-iu_1) \widetilde{\psi}(iu_2) | \int_{|t-T| \leq T_0} \frac{|\zeta(1/2 + it + iu_1 + iT)|}{(1 + |t+u_1 + T|)^{1/4}}
 \\
 \frac{ |\zeta(1/2 + it+iu_1-iT)
 \zeta(1/2 - it - iu_2 + iT) \zeta(1/2 - it-iu_2-iT)|}
 { (1 + |t+u_1-T|)^{1/4} (1 + |t+u_2 + T|)^{1/4} (1 + |t+u_2-T|)^{1/4}} dt du_1 du_2.
\end{multline*}
Next we apply H\"{o}lder's inequality to the inner $t$-integral, with exponents $(4,4,4,4)$.  One of the integrals we need to bound is then
\begin{equation*}
H_1 =  \int_{|t-T| \leq T_0} \frac{|\zeta(1/2 + it + iu + iT)|^4}{(1 + |t+u+T|)} dt.
\end{equation*}
Since $\widetilde{\psi}(iu)$ is small for $|u| \geq A T^{\varepsilon}$, and $A$ is a small power of $T$, we have that $|t+u_1 + T| \asymp T$ in the relevant region of integration, and so by Iwaniec's bound (Proposition \ref{prop:Iwaniec}), we have
\begin{equation*}
 H_1 \ll T^{-1+\varepsilon} (T_0 + T^{2/3})
\end{equation*}
We also need to bound
\begin{equation*}
 H_2 = \int_{|t-T| \leq T_0} \frac{|\zeta(1/2 + it + iu - iT)|^4}{(1 + |t+u-T|)} dt = \int_{|t| \leq T_0} \frac{|\zeta(1/2 + it + iu )|^4}{(1 + |t+u|)} dt.
\end{equation*}
Recall that $\widetilde{\psi}(iu)$ is very small for $|u| \gg A T^{\varepsilon}$, and that we have assumed $A \ll T_0^{1/3}$.  Therefore, we  may certainly restrict the integrals so that $|u| \ll T_0$. Then we can change variables $t \rightarrow t- u$ and extend the integral to a constant multiple of $T_0$, thereby showing $H_2 \ll T^{\varepsilon}$.
%
Therefore, we derive
\begin{equation*}
 Q \ll T^{\varepsilon} T^{-1/2} (T_0 + T^{2/3})^{1/2} \Big[ \int_{\mathbb{R} } |\widetilde{\psi}(-iu)| du \Big]^2.
\end{equation*}
Using \eqref{eq:psiMellinL1norm} completes the proof.
\end{proof}

Applying Lemma \ref{lemma:degeneraterange} to \eqref{eq:IpsialphaPreBinomialTheorem}, we have
\begin{equation}
\label{eq:IpsialphaPreBinomialTheorem2}
I_{\psi,\alpha}(T) = \frac{1}{2\pi} \int_{|t| \leq T - T_0} \left(1+\frac{\alpha}{T}\right)^{-it}F(-it)F(it) dt +
O(ET_1),
\end{equation}
where
\begin{equation}
\label{eq:ET1def}
ET_1 = \| \psi^2 \|_1 +  T^{\varepsilon} A \left(\frac{T_0^{1/2}}{T^{1/2}} + T^{-1/6} \right)  \|\psi^2\|_1.
\end{equation}

Now we take a detour from \eqref{eq:IpsialphaPreBinomialTheorem2} to separate the variables.  The binomial theorem states that if $|x| < 1$, then
\begin{equation*}
 (1+x)^u = \sum_{\ell=0}^{\infty} \binom{u}{\ell} x^{\ell}, \quad \text{where} \quad \binom{u}{\ell} = \frac{u(u-1) \dots(u-\ell+1)}{\ell!}.
\end{equation*}
We are interested in bounding the tail of this series when the imaginary part of $u$ is potentially very large.  In our application, $x = \frac{\alpha}{T}$, and $|\frac{u}{T}| \ll 1$, so $|xu| \ll 1$.
\begin{lemma}
Suppose that $|z| < \frac{1}{10}$, $u \in \mathbb{C}$, and $|uz| \leq C$, for some constant $C > 0$.  Then
\begin{equation}
\label{eq:binomialTailBound}
(1+z)^u = \sum_{\ell = 0}^{L} \binom{u}{\ell} z^{\ell} + O(10^{-L}),
\end{equation}
where the implied constant depends on $C$ only.
\end{lemma}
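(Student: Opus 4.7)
The plan is to apply Cauchy's integral formula to the function $f(w) = (1+w)^u$, defined via the principal branch of the logarithm, which is holomorphic on the open unit disk $|w|<1$ with Taylor expansion $\sum_{\ell\geq 0} \binom{u}{\ell} w^\ell$. For any $r$ with $|z| < r < 1$,
\begin{equation*}
\binom{u}{\ell} = \frac{1}{2\pi i} \oint_{|w|=r} \frac{(1+w)^u}{w^{\ell+1}}\, dw,
\quad\text{so}\quad
\bigl| \binom{u}{\ell} z^\ell \bigr| \leq (|z|/r)^\ell \max_{|w|=r} |(1+w)^u|.
\end{equation*}
The natural choice is $r = 10|z|$ (assuming $z \neq 0$; the case $z=0$ is trivial), which satisfies $|z| < r < 1$ by the hypothesis $|z|<1/10$.

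The key technical step is to bound $M := \max_{|w|=r} |(1+w)^u|$ by a constant depending only on $C$. Writing $|(1+w)^u| = \exp(\text{Re}(u\log(1+w)))$ and using the pointwise estimates $|\log|1+w|| \leq -\log(1-r)$ and $|\arg(1+w)| \leq \arcsin r$ on the circle $|w|=r$, one obtains
\begin{equation*}
\text{Re}\bigl(u \log(1+w)\bigr) \leq |u|\bigl(-\log(1-r) + \arcsin r\bigr).
\end{equation*}
The hypothesis $|uz| \leq C$ gives $|u|\cdot r = 10|uz| \leq 10C$, and since $(-\log(1-r) + \arcsin r)/r$ is bounded on any subinterval $[0, r_0]$ with $r_0 < 1$, we conclude $M \leq e^{O_C(1)}$. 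This yields $\bigl|\binom{u}{\ell} z^\ell\bigr| \leq 10^{-\ell} e^{O_C(1)}$, and summing the geometric tail from $\ell = L+1$ produces the stated error $O(10^{-L})$, with the implied constant depending only on $C$.

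The main obstacle is that the ratio $(-\log(1-r)+\arcsin r)/r$ blows up as $r \to 1^-$, which happens precisely when $|z|$ is close to $1/10$. In the intended application one has $|z| = |\alpha|/T \to 0$, so $r$ is small and the estimate above is uniform and trivially of size $O_C(1)$. In the narrow residual range $|z| \in [\tfrac{1}{20}, \tfrac{1}{10})$, the hypothesis $|uz|\leq C$ instead forces $|u| \leq 10C$ to be bounded; this case is handled separately, for instance by combining the asymptotic $\binom{u}{\ell} = O_u(\ell^{-1-\text{Re}(u)})$ with $|z|^\ell < 10^{-\ell}$ and absorbing the finitely many resulting $u$-dependent constants into the $C$-dependent implied constant, thereby ensuring the tail bound remains $O_C(10^{-L})$ uniformly for $|z| < 1/10$.
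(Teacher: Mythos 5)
Your argument follows essentially the same route as the paper's: both rest on Cauchy's integral formula over the circle $|w| = 10|z|$, and both reduce the heart of the matter to showing $\max_{|w|=10|z|}|(1+w)^u| = O_C(1)$. The paper bounds the remainder $R_L(z)$ by a single contour integral with kernel $(z/w)^{L+1}/(w-z)$, whereas you bound each coefficient $\binom{u}{\ell}z^\ell$ by $(|z|/r)^\ell M$ and sum a geometric tail; these are interchangeable packagings of the same estimate, so I would not call this a different proof.

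Where you genuinely add something is the final paragraph. You are right that the pointwise bound $M = \max_{|w|=r}|(1+w)^u| = O_C(1)$ is \emph{not} uniform as $r = 10|z|\to 1^-$: the paper's line $(1+w)^u = \exp(u(w+O(w^2))) = O(1)$ hides the fact that the implied constant in $\log(1+w) = w + O(w^2)$ is of size $(1-|w|)^{-1}$, so the paper's proof silently uses smallness of $|z|$ and does not cover the full stated range $|z|<1/10$ uniformly. Your fallback for $|z|\in[\tfrac{1}{20},\tfrac1{10})$ is in the right spirit but is not quite closed: with $|u|\le 20C$ one does have $|\binom{u}{\ell}| \ll_C \ell^{-1-\mathrm{Re}(u)}$ uniformly, but when $\mathrm{Re}(u) < -1$ the tail $\sum_{\ell>L}\ell^{-1-\mathrm{Re}(u)}10^{-\ell}$ is $\asymp_C L^{-1-\mathrm{Re}(u)}10^{-L}$, which carries a factor $L^{O_C(1)}$ that is \emph{not} absorbed into a $C$-dependent constant, so the claimed $O_C(10^{-L})$ is just missed. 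This is easy to repair (e.g.\ shrink the radius to $5|z|$ and settle for $O_C(2^{-L})$, or simply accept the $L^{O_C(1)}$ loss), and in the application it is irrelevant since $z=\alpha/T\to 0$ and $L\asymp\log T$; but you should be aware that, strictly speaking, neither the paper's proof nor your patch establishes the lemma with the stated constant uniformly over all $|z|<1/10$.
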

\begin{proof}
By Taylor's theorem, we have
 \begin{equation*}
  (1+z)^u = \sum_{\ell=0}^{L} \binom{u}{\ell} z^{\ell}  + R_{L}(z),
 \end{equation*}
 assuming $|z| < \frac{1}{10}$,
where
\begin{equation*}
 R_L(z) =  \frac{1}{2 \pi i} \int_{|w| = 10 |z|} \left(\frac{z}{w}\right)^{L+1} \frac{(1+w)^u}{w-z} dw.
\end{equation*}
Then by the triangle inequality, we have
\begin{equation*}
 |R_L(z)| \leq \frac{1}{2 \pi} 20 \pi |z| 10^{-L-1} \frac{1}{9|z|} \max_{|w| = 10 |z|} |(1+w)^u| \leq 10^{-L} \max_{|w| = 10 |z|} |(1+w)^u|.
\end{equation*}
Then we have $(1+w)^u = \exp(u \log(1+w)) = \exp(u (w + O(w^2))) = O(1)$, since we assume $|uz| \ll 1$, and $|w| = 10|z| < 1$.
\end{proof}

Applying \eqref{eq:binomialTailBound} to \eqref{eq:IpsialphaPreBinomialTheorem2},  we have
\begin{equation}
\label{eq:IpsialphaPostBinomialTheorem}
 I_{\psi,\alpha}(T) = \sum_{\ell=0}^{L} \left(\frac{\alpha}{T}\right)^\ell \frac{1}{2\pi} \int_{|t| \leq T - T_0} \binom{-it}{\ell} |F(-it)|^2 dt + O(ET_1)
 + O\Big(10^{-L} \int_{|t| \leq T - T_0} |F(-it)|^2 dt \Big).
\end{equation}
Using $|F(-it)| \leq \int_0^{\infty} |E_T(iy)| \psi(y) \frac{dy}{y} \ll T^{1/2+\varepsilon} \| \psi \|_1$, by \eqref{eq:triv}
the second error term in \eqref{eq:IpsialphaPostBinomialTheorem} may be bounded by
\begin{equation*}
 10^{-L} \int_{|t| \leq T - T_0} \left(\int_0^{\infty} |E_T(iy)| \psi(y) \frac{dy}{y}\right)^2 dt \ll 10^{-L} T^{2+\varepsilon} \| \psi^2 \|_1.
\end{equation*}
Choosing
\begin{equation*}
L = 10 \log T + O(1),
\end{equation*}
say, lets us absorb this second error term into $ET_1$.


Next we wish to use an approximation of the form $\binom{-it}{\ell} \approx \frac{(-it)^{\ell}}{\ell !}$, for $\ell \geq 1$ (this is an identity for $\ell = 0$, of course).  However, for $t$ small compared to $\ell$, this is certainly not a good approximation.  For this reason, we employ a trivial bound for $|t| \leq T^{\delta}$ for $\binom{-it}{\ell}$ and we use Lemma \ref{lemma:FboundSubconvexity}, giving
\begin{equation*}
 \int_{|t| \leq  T^{\delta}} \left|\binom{-it}{\ell} \right|  |F(-it)|^2 dt \ll \frac{T^{\delta \ell} 2^{\ell}}{\ell !} A T^{\delta-1/6 + \varepsilon} \| \psi^2 \|_1,
\end{equation*}
for $\ell \geq 1$. We therefore have
\begin{multline*}
I_{\psi,\alpha}(T) =
\frac{1}{2\pi} \int_{|t| \leq T- T_0} |F(-it)|^2 dt
+
\sum_{\ell=1}^{L} \left(\frac{\alpha}{T}\right)^\ell \frac{1}{2\pi }
 \int_{T^{\delta} \leq |t| \leq T - T_0} \binom{-it}{\ell} |F(-it)|^2 dt
\\
+ O(ET_1)
 + O(T^{-\frac{7}{6} + 2 \delta + \varepsilon} A \| \psi^2 \|_1).
\end{multline*}

Meanwhile, we have
\[
 \ell ! \binom{-it}{\ell} = (-it) \ldots (-it-\ell+1) = (-it)^{\ell} \left(1 + \frac{1}{it}\right) \ldots \left(1 + \frac{\ell-1}{it}\right).
\]
For this we use
\begin{align*}
 & \Big|\left(1 + \frac{1}{it}\right) \ldots \left(1 + \frac{\ell-1}{it}\right) - 1\Big|\\
 =& \Big|\frac{1}{it}(1 + \ldots + (\ell-1)) + \frac{1}{(it)^2}\Big(\sum_{1 \leq i < j \leq \ell-1} i j\Big) + \ldots + \frac{1}{(it)^{\ell-1}} (1 \cdot \ldots \cdot(\ell-1))\Big|\\
 \leq & \sum_{k=1}^{\ell-1} \frac{1}{|t|^{k}} \binom{\ell-1}{k}(\ell-1)^k
 \leq  \frac{\ell^2}{|t|} \sum_{k=0}^{\ell-2} \frac{\ell^{2k}}{|t|^{k}} .
\end{align*}
Thus, if $\frac{\ell^2}{|t|} <\frac{1}{2}$, we have
\[
\ell ! \binom{-it}{\ell} = (-it)^\ell \left(1+O\left(\frac{\ell^2}{|t|}\right)\right).
\]
Let us choose $\delta > 0$ so that $T^{\delta} \gg A^2$, and then
\begin{multline*}
T^{-\ell} \int_{T^{\delta} \leq |t| \leq T - T_0} \binom{-it}{\ell} |F(-it)|^2 dt = T^{-\ell} \int_{T^{\delta} \leq |t| \leq T - T_0} \frac{(-it)^{\ell}}{\ell !} |F(-it)|^2 dt
\\
+O\left(\frac{\ell^2}{T \ell !} \int_{T^{\delta} \leq |t| \leq T - T_0}  |F(-it)|^2 dt \right).
\end{multline*}
After using this approximation, we may safely re-extend the integrals to include $|t|\leq T^{\delta}$, without making a new error term.

With Proposition \ref{prop:easybound} below, we will show
\begin{equation*}
 \int_{|t| \leq T- T_0} |F(-it)|^2 dt \ll  T^{\varepsilon} A \| \psi^2 \|_1,
\end{equation*}
using relatively simple methods.  Taking this for granted, we then derive
\begin{lemma}
 Assume $A \ll T^{1/10}$, and $T_0 = T^{1-\eta}$ with some $0 < \eta \leq \frac{1}{10}$.  Then
 \begin{equation}
 \label{eq:IpsialphaPostBinomialTheorem3a}
  I_{\psi,\alpha}(T) = \sum_{\ell=0}^{L} \frac{(-i\alpha)^\ell}{\ell !}   \frac{1}{2\pi } \int_{|t| \leq T - T_0} \left(\frac{t}{T}\right)^{\ell} |F(-it)|^2 dt +
O(ET_1) + O(ET_2),
 \end{equation}
 where
 \begin{equation}
 \label{eq:ET2def}
 ET_2 = AT^{-1+\varepsilon} \|\psi^2\|_1 + A^{} T^{2 \delta - \frac16 - 1} \| \psi ^2\|_1.
 \end{equation}
\end{lemma}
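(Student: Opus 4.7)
The plan is to pick up from the displayed formula preceding the lemma, which already expresses $I_{\psi,\alpha}(T)$ as an $\ell=0$ integral plus, for $\ell=1,\dots,L$, integrals of $(\alpha/T)^\ell\binom{-it}{\ell}|F(-it)|^2$ over the truncated range $T^\delta\leq |t|\leq T-T_0$, modulo errors $O(ET_1)+O(AT^{2\delta-7/6+\varepsilon}\|\psi^2\|_1)$. Two operations are needed to reach the form of the lemma: first, replace each $\binom{-it}{\ell}$ by $(-it)^\ell/\ell!$; second, re-extend each $\ell\geq 1$ integral to the common range $|t|\leq T-T_0$, so that all $\ell=0,\dots,L$ terms combine into a single clean sum.

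First I would apply the asymptotic $\ell!\binom{-it}{\ell}=(-it)^\ell(1+O(\ell^2/|t|))$ derived just above, which is valid on $|t|\geq T^\delta$ since $\ell\leq L=O(\log T)$ makes $\ell^2/|t|\ll(\log T)^2/T^\delta$. Using $|t|\leq T$ in the remainder $\ell^2|t|^{\ell-1}/\ell!$, and summing the absolutely convergent series $\sum_{\ell\geq 1}|\alpha|^\ell\ell^2/\ell!=O_\alpha(1)$, the whole approximation error is bounded by a constant times $T^{-1}\int_{|t|\leq T-T_0}|F(-it)|^2\,dt$. Proposition \ref{prop:easybound} bounds this integral by $T^\varepsilon A\|\psi^2\|_1$, producing the contribution $AT^{-1+\varepsilon}\|\psi^2\|_1$ in $ET_2$.

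Next I would re-extend each $\ell\geq 1$ integral down to $|t|\leq T^\delta$. The additional piece has modulus at most
\[
\sum_{\ell=1}^L \frac{|\alpha|^\ell}{\ell!}\int_{|t|\leq T^\delta}\left(\frac{|t|}{T}\right)^\ell|F(-it)|^2\,dt \ll (e^{|\alpha|T^{\delta-1}}-1)\int_{|t|\leq T^\delta}|F(-it)|^2\,dt \ll |\alpha|T^{\delta-1}\int_{|t|\leq T^\delta}|F(-it)|^2\,dt,
\]
once $\delta<1$. Lemma \ref{lemma:FboundSubconvexity} gives $|F(-it)|^2\ll T^\varepsilon A(TT_0)^{-1/6}\|\psi^2\|_1\ll T^\varepsilon AT^{-1/6}\|\psi^2\|_1$ uniformly for $|t|\leq T-T_0$, so the tail integral is at most $T^{\delta-1/6+\varepsilon}A\|\psi^2\|_1$ and the overall contribution is $O_\alpha(AT^{2\delta-7/6+\varepsilon}\|\psi^2\|_1)$, matching the second piece of $ET_2$.

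Combining the two steps, and noting that the $\ell=0$ term survives unchanged because $(t/T)^0=1$ and $(-i\alpha)^0/0!=1$, yields exactly the formula in the statement. I do not anticipate a serious obstacle here: the only real subtlety is parameter bookkeeping, and the hypotheses $A\ll T^{1/10}$ and $\eta\leq 1/10$ are precisely what guarantee that both Lemma \ref{lemma:FboundSubconvexity} (whose range is $|t|\leq T-T_0$) and Proposition \ref{prop:easybound} apply cleanly, while $\delta$ must satisfy the inherited constraint $T^\delta\gg A^2$ and simultaneously be small enough that $AT^{2\delta-7/6}$ remains a genuine error.
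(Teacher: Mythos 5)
Your proof is correct and follows essentially the same route as the paper: replace $\binom{-it}{\ell}$ by $(-it)^\ell/\ell!$ with a relative error $O(\ell^2/|t|)$ controlled by Proposition \ref{prop:easybound}, then re-extend the $\ell\geq 1$ integrals over $|t|\leq T^\delta$ using Lemma \ref{lemma:FboundSubconvexity}, noting this reinstatement is of the same order as the truncation error already present. The parameter bookkeeping and the two contributions to $ET_2$ match the paper's argument.
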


Let
\begin{equation*}
 W_{T}(n,t) = \frac{1}{2 \pi i} \int_{(a)} \widetilde{\psi}(-u) n^{-u}
\frac{\gamma_{V_T}(1/2 + it+u)}{\gamma_{V_T}(1/2+it)} du,
\end{equation*}
where $a > 1/2$.
\begin{lemma}
\label{lemma:weightfunction}
 Suppose that $|t| \leq T - T_0$, where $T_0 = T^{1-\eta}$ with $0 < \eta \leq \frac{1}{10}$.  Then
 \begin{equation}
 \label{eq:WellTasymptotic}
  W_{T}(n,t) = \psi\left(\frac{\sqrt{T^2-t^2}}{2\pi n} \right) +
O_{\delta}(A^3 n^{-1/2-\delta} T^{1/4 + \delta/2} T_0^{-3/4 + \delta/2}),
 \end{equation}
where $0 < \delta < 1$.
\end{lemma}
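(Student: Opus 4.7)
The plan is to apply Stirling's asymptotic expansion to the quotient of gamma factors inside the integrand defining $W_T(n,t)$, thereby reducing it (up to a small multiplicative error) to $\widetilde{\psi}(-u) X^u$ with $X=\sqrt{T^2-t^2}/(2\pi n)$, and then invoke Mellin inversion for the compactly supported $\psi$ to identify $\psi(X)$ as the main term.

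Concretely, factor
\[
\frac{\gamma_{V_T}(1/2+it+u)}{\gamma_{V_T}(1/2+it)} = \pi^{-u}\prod_{\epsilon=\pm 1}\frac{\Gamma(z_\epsilon + u/2)}{\Gamma(z_\epsilon)},\qquad z_\epsilon := \frac{1/2+i(t+\epsilon T)}{2}.
\]
For $|t|\le T-T_0$, we have $|z_+|\asymp T$ and $|z_-|\asymp T_0$, while the rapid decay \eqref{eq:psiMellinbound} of $\widetilde{\psi}$ effectively restricts attention to $|u|\ll AT^{\varepsilon}$. Apply the Stirling expansion
\[
\frac{\Gamma(z+a)}{\Gamma(z)} = z^a\exp\!\Bigl(\tfrac{a^2-a}{2z}\Bigr)\bigl(1+O(|a|^3/|z|^2)\bigr)
\]
to each ratio with $a=u/2$; the combined multiplicative error is $O(|u|^2/T_0)$ in the regime $|u|^2\ll T_0$. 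A brief branch check---$z_+$ lies in the upper half-plane and $z_-$ in the lower (since $|t|<T$), so the sum of their principal arguments stays inside $(-\pi,\pi)$---gives $z_+^{u/2}z_-^{u/2}=(z_+z_-)^{u/2}$, and since $z_+z_-=(T^2-t^2)/4+it/4+1/16$ agrees with $(T^2-t^2)/4$ up to a factor $1+O(1/T_0)$, combining with $\pi^{-u}$ yields
\[
\frac{\gamma_{V_T}(1/2+it+u)}{\gamma_{V_T}(1/2+it)} = \Bigl(\frac{\sqrt{T^2-t^2}}{2\pi}\Bigr)^{\!u}\bigl(1+\mathcal{R}(u,t,T)\bigr),\qquad \mathcal{R}\ll |u|^2/T_0.
\]
Inserting this into the definition of $W_T(n,t)$ and applying Mellin inversion to the $X^u$-part yields the asserted main term $\psi(\sqrt{T^2-t^2}/(2\pi n))$.

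For the remainder, shift the contour to $\mathrm{Re}(u)=1/2+\delta$---no poles are crossed because $\widetilde{\psi}$ is entire and the poles of $\gamma_{V_T}(1/2+it+u)$ lie at imaginary distance $\gtrsim T_0$ from the real line. Bound $X^{1/2+\delta} = (T^2-t^2)^{1/4+\delta/2}(2\pi n)^{-1/2-\delta}$, split the $u$-integral at $|u|=AT^{\varepsilon}$ (using the Stirling bound $|\mathcal{R}|\ll|u|^2/T_0$ on the inner range and the rapid decay \eqref{eq:psiMellinbound} on the outer tail), and control $\int|\widetilde{\psi}(-u)||u|^2\,du\ll A^3$ via Cauchy--Schwarz, Parseval, and the assumption \eqref{eq:psiprimeandpsisquaredrelation}.

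The main obstacle will be matching the precise form of the claimed error $A^3 n^{-1/2-\delta}T^{1/4+\delta/2}T_0^{-3/4+\delta/2}$: a naive use of Stirling delivers only $A^3 n^{-1/2-\delta}T^{1/2+\delta}T_0^{-1}$, weaker by a factor $(T/T_0)^{1/4+\delta/2}$. Closing this gap requires either exploiting the explicit structure of the leading Stirling correction $(u^2-u)/(4z_\pm)$---reinterpreting $\int u^k\widetilde{\psi}(-u)X^u\,du$ as $(X\partial_X)^k\psi(X)$, which is supported only on $X\in[1,3]$ and thus gives a sharper $X$-dependence---or a more careful estimation that decouples $(T^2-t^2)^{1/4+\delta/2}$ into contributions $T^{1/4+\delta/2}$ from the large gamma and $T_0^{1/4+\delta/2}$ from the small one, with the latter absorbing part of the $T_0^{-1}$ factor to yield $T_0^{-3/4+\delta/2}$. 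The branch bookkeeping in Step 1 is routine but must be carried out with care to avoid sign errors in $(z_+z_-)^{u/2}$.
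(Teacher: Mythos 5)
Your proposal follows essentially the same route as the paper: Stirling's asymptotic for the ratio of gamma factors, Mellin inversion to identify the main term $\psi\bigl(\sqrt{T^2-t^2}/(2\pi n)\bigr)$, and then a contour at $\mathrm{Re}(u)=1/2+\delta$ together with $\int|\widetilde{\psi}(-u)|(1+|u|^2)\,du\ll A^3$ to control the remainder. The ``obstacle'' you flag is not really an obstacle, and your second alternative is precisely what the paper does; you should carry it out with confidence. Concretely, do not flatten the two Stirling corrections into a single $O(|u|^2/T_0)$. Keep them separate, as
\[
\frac{\gamma_{V_T}(1/2+it+u)}{\gamma_{V_T}(1/2+it)}=\left(\frac{\sqrt{T^2-t^2}}{2\pi}\right)^{\!u}\left(1+O\Bigl(\tfrac{1+|u|^2}{|T-t|}\Bigr)+O\Bigl(\tfrac{1+|u|^2}{|T+t|}\Bigr)\right),
\]
and then multiply the $|T\mp t|^{-1}$ factors by $(T^2-t^2)^{a/2}=(T-t)^{a/2}(T+t)^{a/2}$ before taking suprema over $t$. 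Since $a/2-1<0$, the worst case of $(T-t)^{a/2-1}$ occurs at $T-t=T_0$, while $(T+t)^{a/2}\ll T^{a/2}$, giving exactly $n^{-a}\,T_0^{a/2-1}T^{a/2}A^3$, which at $a=1/2+\delta$ is the claimed $A^3\,n^{-1/2-\delta}\,T^{1/4+\delta/2}\,T_0^{-3/4+\delta/2}$ (the other sign contributes less). Your branch bookkeeping for $z_+^{u/2}z_-^{u/2}$ is more careful than the paper's, which simply writes $|T+t|^{u/2}|T-t|^{u/2}$; the resulting phase discrepancy is $O(|u|/T_0)$ and is absorbed into the Stirling error, as you anticipate.
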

\begin{proof}

Using \eqref{eq:psiMellinbound} and \eqref{eq:gammaVTStirlingBound} shows that the tail of the integral with $|\text{Im}(u)| \gg A T^{\varepsilon}$ is bounded by
\begin{equation*}
\int_{|v| \gg A T^{\varepsilon}} T^{-1000} (1+|v|)^{-100} n^{-a} \frac{(1+|t+v+T|)^{\frac{a}{2}-\frac14}}{|t+T|^{-1/4}} \frac{(1+|t+v-T|)^{\frac{a}{2}-\frac14}}{|t-T|^{-1/4}} \exp(-\tfrac{\pi}{4} Q(v,t,T)) dv,
\end{equation*}
where $Q(v,t,T) = |t+T+v| + |t-T+v| - |t+T| - |t-T|$.  It is easy to see that  in terms of $v$, $Q$ is minimized on the interval $[-t-T, -t+T]$ (note $Q$ is constant on this interval), in which case $Q(v,t,T) = 0$, since $|t| \leq T$.  Therefore, it is easy to see that the tail is much smaller than the displayed error term in \eqref{eq:WellTasymptotic}, assuming $1/2 < a  < 3/2$, since the integrand is exponentially small.

In the range $|t|  \leq T - T_0$, $|\text{Im}(u)| \leq A T^{\varepsilon}$, Stirling gives
\begin{equation}
\label{eq:Stirling}
 \Gamma\left(\frac{1/2 + it + u \pm iT}{2}\right) = \Gamma\left(\frac{1/2 + it \pm iT}{2}\right)
 \left(\frac{1/2 + it \pm iT}{2} \right)^{u/2} \left(1 + O\left(\frac{(1+|u|^2)}{|t\mp T|} \right) \right).
\end{equation}
Note that $|u|^2 \ll A^2 T^{2\varepsilon}$, while $|t \mp T| \geq T_0$, so this error term is indeed smaller than $1$, and it is acceptable to truncate the asymptotic expansion at this point.

In this way, one derives from \eqref{eq:gammaVformula} and \eqref{eq:Stirling} the asymptotic
\begin{equation*}
\frac{\gamma_{V_T}(1/2 + it + u)}{\gamma_{V_T}(1/2 + it) } = (2\pi)^{-u} |t+T|^{\frac{u}{2}} |t-T|^{\frac{u}{2}}
\left(1 + \sum_{\pm} O\left(\frac{(1+|u|^2)}{|t\mp T|} \right) \right).
\end{equation*}
Therefore,
\[
W_{T}(n,t) =
\frac{1}{2 \pi i} \int_{\substack{\text{Re}(u) = a \\ |\text{Im}(u)| \ll A T^{\varepsilon}}} \widetilde{\psi}(-u)
\left(\frac{\sqrt{T^2-t^2}}{2 \pi n} \right)^u
 \left(1 + \sum_{\pm} O\left(\frac{(1+|u|^2)}{|t\mp T|} \right) \right) du + O(n^{-1} T^{-100}).
\]

The leading term here gives $\psi(\frac{\sqrt{T^2-t^2}}{2\pi n})$, after extending the integral to include $|\text{Im}(u)| \gg A T^{\varepsilon}$,
since
\begin{equation*}
\frac{1}{2 \pi i} \int_{(a)} \widetilde{\psi}(-u) x^u du =  \psi(x).
\end{equation*}
The error term arising from Stirling is
\begin{equation*}
\ll (T-t)^{\frac{a}{2}-1} T^{\frac{a}{2}} n^{-a} \int_{-\infty}^{\infty} |\widetilde{\psi}(iv)| (1 + v^2) dv.
\end{equation*}
The inner integral above is $\ll A^3$, and so taking $a = 1/2 + \delta$ completes the proof.
\end{proof}

Applying Lemma \ref{lemma:weightfunction} to \eqref{eq:Fformula1},  using \eqref{eq:psiMellinbound} to estimate the constant terms,
and using $|\rho^*(1) \gamma_{V_T}(1/2+it)| \ll T^{-1/4+\varepsilon} T_0^{-1/4}$,
we derive a pointwise approximation to $F$ of the form
\begin{equation}
\label{eq:FGapproximation}
F(it ) =
G(it) +
O_{\varepsilon}\left(\frac{A^3 T^{\varepsilon}}{ T_0^{}}\right),
\end{equation}
where
\begin{equation}
\label{eq:Gdef}
G(it) = 2 \rho^*(1) \gamma_{V_T}(1/2 + it) \sum_{n=1}^{\infty} \frac{\tau_{iT}(n)}{n^{1/2+it}} \psi\left(\frac{\sqrt{T^2-t^2}}{2\pi n}\right).
\end{equation}

Defining
\begin{equation*}
J_{\ell} = \frac{1}{2\pi} \int_{|t| \leq T-T_0} \frac{t^{\ell}}{T^{\ell}} |F(it)|^2 dt,
\quad \text{and} \quad
K_{\ell} = \frac{1}{2\pi} \int_{|t| \leq T-T_0}  \frac{t^{\ell}}{T^{\ell}} |G(it)|^2 dt,
\end{equation*}
we obtain from \eqref{eq:FGapproximation} and Cauchy-Schwarz that
\begin{equation}
\label{eq:IellInitialApproximation}
J_{\ell} = K_{\ell} + O(K_{\ell}^{1/2} A^3 T_0^{-1} T^{1/2 + \varepsilon}) + O( A^6 T_0^{-2} T^{1+2 \varepsilon} ).
\end{equation}

Our goal is now to find an asymptotic for $K_{\ell}$.  In order to simplify the error term in \eqref{eq:IellInitialApproximation}, we claim the following
\begin{proposition}[Large sieve-type bound]
\label{prop:easybound}
 We have
 \[
  \int_{|t| \leq T-T_0} |G(it)|^2 dt \ll T^{\varepsilon} A \| \psi^2 \|_1.
 \]
The same bound holds with $G(it)$ replaced by $F( it)$, provided $A \ll T^{1/10}$ and $T_0 \gg T^{9/10}$.
\end{proposition}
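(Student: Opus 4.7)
The plan is to express $G(it)$ via Mellin inversion as a contour integral of two shifted Riemann zeta functions, shift to the critical line, and then bound the resulting double integral using Cauchy--Schwarz together with Iwaniec's fourth-moment estimate (Proposition \ref{prop:Iwaniec}).

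First, using Mellin inversion of $\psi$ inside the sum defining $G(it)$ and absolute convergence for $\mathrm{Re}(u) > 1/2$, I would write
\begin{equation*}
G(it) = \frac{2\rho^*(1)\gamma_{V_T}(1/2+it)}{2\pi i}\int_{(a)}\widetilde{\psi}(-u)\zeta(1/2+u+i(t+T))\zeta(1/2+u+i(t-T))\Bigl(\tfrac{\sqrt{T^2-t^2}}{2\pi}\Bigr)^u du.
\end{equation*}
Next I would shift the contour from $\mathrm{Re}(u)=a$ to $\mathrm{Re}(u)=0$. The two simple poles at $u = 1/2 - i(t\pm T)$ have $|\mathrm{Im}(u)| \geq T_0$ under the hypothesis $|t|\leq T-T_0$; since $T_0$ dominates every power of $A$, \eqref{eq:psiMellinbound} makes these residues $O(T^{-100})$. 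On the new contour the oscillatory factor $(T^2-t^2)^{u/2}(2\pi)^{-u}$ has modulus one, and Cauchy--Schwarz in $v = \mathrm{Im}(u)$ with respect to the weight $|\widetilde{\psi}(-iv)|$ yields
\begin{equation*}
|G(it)|^2 \ll T^\varepsilon (T^2-t^2)^{-1/2}\Bigl(\int_\mathbb{R}|\widetilde{\psi}(-iv)|\,dv\Bigr)\int_\mathbb{R}|\widetilde{\psi}(-iv)||\zeta_{+}(t,v)\zeta_{-}(t,v)|^2 dv,
\end{equation*}
where $\zeta_{\pm}(t,v) = \zeta(1/2 + i(t+v\pm T))$, and I have used $|\rho^*(1)\gamma_{V_T}(1/2+it)|^2 \ll T^\varepsilon(T^2-t^2)^{-1/2}$ (from Stirling and the Vinogradov--Korobov lower bound on $\zeta(1+2iT)$).

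After swapping orders of integration, the main step is to control the $t$-integral
\begin{equation*}
I(v) := \int_{|t|\leq T-T_0}(T^2-t^2)^{-1/2}|\zeta_{+}(t,v)\zeta_{-}(t,v)|^2 dt
\end{equation*}
uniformly in $|v|\ll AT^\varepsilon$. To handle the singular weight at $|t|=T$, I would dyadically decompose in $T-|t|\asymp 2^k$ for $T_0 \leq 2^k \leq T$. On each piece the weight is $\asymp (T 2^k)^{-1/2}$, while one argument of $\zeta_\pm$ sits near $\pm 2T$ and the other has magnitude $\asymp 2^k$. A further Cauchy--Schwarz on the product followed by Proposition \ref{prop:Iwaniec} applied to each fourth moment produces $\ll T^\varepsilon$ overall for $I(v)$, with the cases $2^k \geq T^{2/3}$ and $2^k \leq T^{2/3}$ each contributing acceptably. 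Two applications of \eqref{eq:psiMellinL1norm} (controlling $\int|\widetilde{\psi}(-iv)|dv \ll A^{1/2}\|\psi\|_2$) combined with the comparison $\|\psi\|_2^2 \asymp \|\psi^2\|_1$ valid for $\psi$ supported on $[1,3]$ then deliver $\int_{|t|\leq T-T_0}|G(it)|^2 dt \ll T^\varepsilon A\|\psi^2\|_1$.

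For $F$ in place of $G$, I would invoke the pointwise approximation \eqref{eq:FGapproximation}: writing $|F|^2 \leq 2|G|^2 + 2|F-G|^2$ and integrating, the error $\int|F-G|^2 dt \ll T \cdot A^6 T^{2\varepsilon}/T_0^2$ is a negative power of $T$ under $A \ll T^{1/10}$ and $T_0 \gg T^{9/10}$, and it is absorbed using $\|\psi^2\|_1 \gg A^{-1}$. The main technical obstacle is the dyadic estimate of $I(v)$: it is precisely Iwaniec's uniform-in-length fourth-moment bound that lets the summation over dyadic pieces collapse to $T^\varepsilon$; a merely pointwise subconvex bound for $\zeta$ on the critical line would lose a power of $T$ that no subsequent step recovers.
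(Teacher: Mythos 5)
Your argument is correct, but it takes a genuinely different route from the paper's. You express $G(it)$ as a contour integral in $\zeta(1/2+u+i(t\pm T))$, shift to the critical line, apply a weighted Cauchy--Schwarz, and then control the resulting $t$-integral $I(v)$ dyadically using Iwaniec's short-interval fourth moment (Proposition \ref{prop:Iwaniec}). That chain of estimates is sound: on a dyadic block $T-|t|\asymp 2^k$ one zeta factor has argument $\asymp T$ over an interval of length $2^k$ (needing Iwaniec when $2^k < T^{2/3}$) and the other has argument $\asymp 2^k$, and your case split gives $I(v)\ll T^\varepsilon$ uniformly for $|v|\ll AT^\varepsilon$; two applications of \eqref{eq:psiMellinL1norm}, together with $\|\psi\|_2^2\asymp\|\psi^2\|_1$ on $[1,3]$, then produce the stated bound. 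The paper, by contrast, proves the proposition without any appeal to moments of $\zeta$: it keeps the Dirichlet series $\sum_{n\asymp\sqrt{\Delta T}}\tau_{iT}(n)n^{-1/2-it-u}$ intact, separates the mild $u$-dependence of the coefficients via a Mellin parameter, and then invokes the mean value theorem for Dirichlet polynomials $\int_0^\Delta|\sum_{n\leq N}a_n n^{-it}|^2 dt\ll (N+\Delta)\sum|a_n|^2$ on each dyadic block $|t-T|\asymp\Delta$, with $N=\sqrt{\Delta T}\geq\Delta$. This is why the proposition is labelled ``large sieve-type": the input is the elementary mean-value/large-sieve inequality, not a zeta-moment bound. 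Your proof trades that elementary input for a deeper but more specialized one; it works here because the Dirichlet series factors as a product of two shifted zetas, while the paper's argument would survive with any divisor-bounded coefficients. Both proofs lose the same factor $A$ from the separation/truncation of the Mellin variable, both absorb $\log$ factors from the dyadic sum into $T^\varepsilon$, and your treatment of $F$ via $|F|^2\leq 2|G|^2+2|F-G|^2$ and \eqref{eq:FGapproximation} is exactly the paper's.
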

We defer the proof to Section \ref{section:largesieve}.  For now, we simplify the error term as follows.  Note that \eqref{eq:psiintegrallowerbound} implies $K_{\ell}^{1/2} \ll A T^{\varepsilon} \| \psi^2 \|_1$, and $1 \ll A \| \psi^2 \|_1$.  Using these, and applying \eqref{eq:IellInitialApproximation} to \eqref{eq:IpsialphaPostBinomialTheorem3a}, we deduce
\begin{equation}
 \label{eq:IpsialphaPostBinomialTheorem3b}
  I_{\psi,\alpha}(T) = \sum_{\ell=0}^{L} \frac{(-i\alpha)^\ell}{\ell !}   K_{\ell} +
O(ET_1) + O(ET_2) + O(ET_3),
 \end{equation}
under the hypotheses of Proposition \ref{prop:easybound}, where
\begin{equation}
\label{eq:ET3def}
 ET_3 = (T_0^{-1} A^4 T^{1/2 + \varepsilon} + T_0^{-2} A^7 T^{1+\varepsilon}) \|\psi^2 \|_1.
\end{equation}


Next we develop some analytic properties of the weight function appearing in \eqref{eq:Gdef}.
Define
\begin{equation}
\label{eq:Psidef}
\Psi_{T}(x,t) =  \psi\left(\frac{\sqrt{T^2-t^2}}{2\pi x} \right).
\end{equation}
We need to understand derivatives of $\Psi_{T}$ in terms of both $x$ and $t$.  We shall build this up from simpler estimates.  

With shorthand $Y = (T^2-t^2)^{1/2}/(2\pi)$, we have from Faa di Bruno's formula
\begin{multline*}
 \frac{d^n}{dx^n} \psi\left(\frac{Y}{x}\right)
 \\
 = \sum_{m_1 + 2m_2 + \dots + n m_n = n} \frac{n!}{m_1! (1!)^{m_1} \dots m_n! (n!)^{m_n}} \psi^{(m_1 + \dots + m_n)}\left(\frac{Y}{x}\right) \left(\frac{1! Y}{x} \frac{1}{-x}\right)^{m_1} \dots \left(\frac{n! Y}{x} \frac{1}{(-x)^n}\right)^{m_n},
\end{multline*}
which leads to the bound
\begin{equation*}
 \frac{d^n}{dx^n} \psi\left(\frac{Y}{x}\right) \ll \sum_{m_1 + 2m_2 + \dots + n m_n = n} \frac{n!}{m_1!  \dots m_n! } \left(\frac{Y}{x}\right)^{m_1 + \dots + m_n} \psi^{(m_1 + \dots + m_n)}\left(\frac{Y}{x}\right) \frac{1}{x^{n}} \ll \left(\frac{A}{x} \right)^n.
\end{equation*}
By a similar calculation, we have
\begin{equation*}
 \frac{d^n}{dt^n} \psi\left(\frac{\sqrt{T^2-t^2}}{2 \pi x} \right) \ll \left(\frac{A}{T-|t|} \right)^n.
\end{equation*}
These bounds immediately imply
\begin{equation}
\label{eq:PsiDerivativeBoundsWithxAndt}
\frac{\partial^{j}}{\partial t^j} \Psi_{T}(x,t) \ll (T- |t|)^{-j} A^{j},
\quad
\frac{\partial^{j}}{\partial x^j} \Psi_{T}(x,t) \ll x^{-j} A^{j},
\end{equation}
using \eqref{eq:psibound} and that $\psi$ is supported on $[1,3]$.

It may also be useful to record that
\begin{equation*}
\int_0^{\infty} \Psi_{T}(x,t)^2 x^v \frac{dx}{x} = \left(\frac{T^2-t^2}{4\pi^2}\right)^{v/2} \widetilde{\psi^2}(-v) \ll_{\text{Re}(v)} (T^2-t^2)^{\text{Re}(v)/2} 
\| \psi^2 \|_1.
\end{equation*}

If we let $\Phi_{t,T}(x) = \Psi^2_{T}(x,t)$, and let $\widetilde{\Phi}_{t,T}$ denote its Mellin transform (with respect to the $x$-variable), then we have
\begin{equation*}
 \frac{d^n}{dx^n} \Phi_{t,T}(x) \ll \left(\frac{A}{x}\right)^n,
\end{equation*}
and therefore if $\Phi$ is supported on $x \asymp X$, then
\begin{equation}
\label{eq:PhiMellinBound}
 \widetilde{\Phi}_{t,T}(v) \ll_{\text{Re}(v)} X^{|\text{Re}(v)|} \left(1 + \frac{|v|}{A}\right)^{-C}.
\end{equation}

\section{Shifted convolution sum approach}
\subsection{Statement of result, and smooth partition of unity}
The purpose of this section is to show
\begin{proposition}
\label{prop:bulklt}
Suppose that $\ell \leq L \ll \log T \ll A \ll T^{1/10}$, and $T_0 = T^{1-\eta}$ with $0 < \eta \leq 1/10$.  Then
we have
\begin{multline*}
K_{\ell}=
\frac{6}{\pi} \log(1/4 + T^2) \left(c_{\ell} + O((\log T)^{-1/3+\varepsilon})\right)\| \psi^2 \|_1
\\
+
  O(T^{-\frac{5}{12} + \varepsilon} T_0^{\frac14} A^{\frac12} + T_0^{-1} A^2 T^{\varepsilon} )
+O(   A^3 T_0^{-\frac{9}{4}} T^{\frac{25}{12}+\varepsilon} + A^{\frac32} T_0^{-\frac{5}{8}} T^{\frac{13}{24}+\varepsilon}).
\end{multline*}
where $c_{\ell} =0$ for $\ell$ odd, and for $\ell$ even,
 \begin{equation*}
 c_{\ell} =  \frac{\Gamma(\frac{1+\ell}{2})}{\Gamma(\frac12) \Gamma(1+\frac{\ell}{2})}.
 \end{equation*}
\end{proposition}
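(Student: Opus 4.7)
The strategy is to square out $|G(it)|^2$ into a double sum over $n_1,n_2$ and decompose $K_\ell = K_\ell^{\mathrm{diag}} + K_\ell^{\mathrm{off}}$ according to $n_1=n_2$ vs.\ $n_2=n_1+m$ with $m\neq 0$. Since $\tau_{iT}(n)$, $\psi$ and $\rho^*(1)$ are real and $\overline{\gamma_{V_T}(\tfrac12+it)}=\gamma_{V_T}(\tfrac12-it)$, we have $G(-it)=\overline{G(it)}$, so $|G(it)|^2$ is even in $t$; hence $K_\ell$ vanishes for $\ell$ odd, matching $c_\ell=0$.

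\textbf{Diagonal.} For $K_\ell^{\mathrm{diag}}$, Mellin inversion of $\Phi_{t,T}(x)=\Psi_T(x,t)^2$ gives
\[
\sum_{n\ge 1}\frac{\tau_{iT}(n)^2}{n}\Psi_T(n,t)^2=\frac{1}{2\pi i}\int_{(c)}\widetilde\Phi_{t,T}(v)\,Z(1+v,E_T)\,dv,
\]
with $Z$ as in \eqref{eq:Zformula}. I would shift the contour past the double pole of $\zeta(1+v)^2$ at $v=0$. Using $\widetilde\Phi_{t,T}(v)=\bigl(\tfrac{\sqrt{T^2-t^2}}{2\pi}\bigr)^{v}\widetilde{\psi^2}(-v)$, the residue equals
\[
\frac{|\zeta(1+2iT)|^2}{\zeta(2)}\left(\tfrac12\log(T^2-t^2)\,\|\psi^2\|_1 + B(t,T)\right),
\]
where $B(t,T)$ involves $(\widetilde{\psi^2})'(0)$, $(\zeta'/\zeta)(1\pm 2iT)$, and Euler's constant; the Vinogradov--Korobov estimate \eqref{eq:VinogradovKorobovZetalogarithmicderivative} yields the advertised $(\log T)^{-1/3+\varepsilon}$ relative savings. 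The integral on the shifted line is negligible by \eqref{eq:PhiMellinBound} and convexity bounds on $Z$. Stirling on $\gamma_{V_T}(\tfrac12+it)$ together with the formula $|\rho^*(1)|^2=2/(\pi|\theta(\tfrac12+iT)|^2)$ yields $4|\rho^*(1)\gamma_{V_T}(\tfrac12+it)|^2\sim 4\pi|\zeta(1+2iT)|^{-2}(T^2-t^2)^{-1/2}$ uniformly on $|t|\le T-T_0$, exactly cancelling the $|\zeta(1+2iT)|^2$ factor. Substituting $t=T\sin\theta$ turns the outer integral into $(\log T^2+O(1))\cdot\tfrac{1}{\pi}\int_0^{\pi/2}\sin^\ell\theta\,d\theta$, and using $\zeta(2)=\pi^2/6$ along with $\int_0^{\pi/2}\sin^\ell\theta\,d\theta=\tfrac{\sqrt\pi}{2}\Gamma(\tfrac{\ell+1}{2})/\Gamma(1+\tfrac\ell2)$ reproduces exactly the main term $\tfrac{6}{\pi}\log(1/4+T^2)\,c_\ell\,\|\psi^2\|_1$.

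\textbf{Off-diagonal.} For $K_\ell^{\mathrm{off}}$ I would introduce a dyadic partition of $n_1\asymp Y$ with $Y$ ranging between $\sqrt{TT_0}$ and $T$ (the support of $\Psi_T(\cdot,t)$), and for each shift $m\neq 0$ apply Proposition~\ref{prop:SCS} with weight
\[
w(x)=(x(x+m))^{-1/2}\Bigl(\frac{x+m}{x}\Bigr)^{it}\Psi_T(x,t)\Psi_T(x+m,t),
\]
which satisfies $w^{(j)}\ll(A/Y)^{j}$ by \eqref{eq:PsiDerivativeBoundsWithxAndt}, so $P\asymp A$ and $R=A+TM/Y$. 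The $M.T.$ of Proposition~\ref{prop:SCS} yields the off-diagonal main term, whose analysis is deferred to Section~\ref{section:ODMT}. The $E.T.$, summed over $|m|\le M$ using the second bound of Proposition~\ref{prop:SCS} (subject to $R\ll T(TY)^{-\delta}$), then integrated in $t$ over the relevant sub-range (length $\sim T$ for $Y\sim T$ and length $\sim T_0$ for $Y\sim\sqrt{TT_0}$), and multiplied by $|\rho^*(1)\gamma_{V_T}|^2\ll 1/(Y|\zeta(1+2iT)|^2)$, reproduces the four explicit error terms in the statement after optimising $M$ for each dyadic $Y$ and summing.

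\textbf{Main obstacle.} The principal difficulty is the off-diagonal main term coming from the $M.T.$ of Proposition~\ref{prop:SCS}: it is an oscillatory sum of the same order as the diagonal, and must be shown—via phase analysis in $t$ and cancellation in $m$—to absorb into the error, which is the technical innovation flagged in the introduction. A secondary delicacy is keeping the $(\log T)^{-1/3+\varepsilon}$ multiplicative savings uniform in $\ell\le L\ll\log T$ while carrying the $(t/T)^\ell$ weight through the Stirling expansion of $\gamma_{V_T}$; naive bounds lose a logarithm in $\ell$ that would destroy the claimed quality.
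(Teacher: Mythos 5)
Your diagonal analysis tracks the paper's own (Mellin inversion against $Z(1+v,E_T)$, shift past the double pole at $v=0$, Vinogradov--Korobov for the $(\log T)^{-1/3+\varepsilon}$ savings, Stirling, and the substitution $t=T\sin\theta$), and reproduces the main term correctly. The off-diagonal sketch, however, has a genuine gap. You propose to apply Proposition~\ref{prop:SCS} for each \emph{fixed} $t$ with the $x$-weight
\[
w(x)=\frac{1}{\sqrt{x(x+m)}}\Bigl(\frac{x+m}{x}\Bigr)^{it}\Psi_T(x,t)\,\Psi_T(x+m,t),
\]
claiming $w^{(j)}\ll (A/Y)^j$ and hence $P\asymp A$. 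This is false for most $m$ in your range: the factor $\bigl(\frac{x+m}{x}\bigr)^{it}$ has phase derivative $it\bigl(\frac{1}{x+m}-\frac1x\bigr)\asymp\frac{tm}{Y^2}$, so the honest bound is $w^{(j)}\ll\bigl(\max(A/Y,\,Tm/Y^2)\bigr)^{j}$, i.e.\ $P\asymp A+Tm/Y$. The hypothesis $w^{(j)}\ll(A/Y)^j$ already fails once $|m|\gg AY/T$, which is satisfied by almost all $m\le M$ since $M\asymp\frac{NA}{\Delta}T^\varepsilon\gg AN/T$. Worse, the support of $\Psi_T$ only forces $|m|\lesssim Y$, and you give no reason at all for truncating the shift sum at $|m|\le M$; summing the SCS error bound over the full range $|m|\lesssim Y$ would dwarf every error term in the statement.

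The paper resolves both issues at once by performing the $t$-integral \emph{first}, producing the kernel $K(m,n)$. Because the $t$-integrand has derivative control $\ll (A/\Delta)^j$ (this is \eqref{eq:PsiPsiderivativebound}, which also handles the $(t/T)^\ell$ weight via $\ell/T\ll 1/\Delta$), repeated integration by parts in $t$ gives $K(m,n)\ll(\Delta T)^{-1/2}\Delta\bigl(1+\frac{\Delta}{A}|\log(m/n)|\bigr)^{-C}$, which supplies the truncation $|h|\ll\frac{NA}{\Delta}T^\varepsilon$ and simultaneously produces the correct $x$-derivative bound $\frac{d^j}{dx^j}K(x+h,x)\ll(\Delta T)^{-1/2}\Delta\, T^\varepsilon\bigl(\frac{AT}{N\Delta}\bigr)^j$, so that Proposition~\ref{prop:SCS} applies with $P=\frac{AT}{\Delta}$ and $R\asymp P$. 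Your $R=A+TM/Y$ happens to land on the same order of magnitude, but the premise from which you derived it is not satisfied, and the missing $m$-truncation is a substantive hole rather than a bookkeeping detail. (Your $Y$-dyadic partition is otherwise harmless: it is the image of the paper's $\Delta$-partition under $Y\asymp\sqrt{\Delta T}$; the relevant change from the paper is the order of the $t$-integration and the SCS application.)
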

\begin{remark}
The key calculation for evaluating the sum over $\ell$ of the main term is that
\begin{equation}
\label{eq:cellsumevaluation}
\sum_{\ell \text{ even}} \frac{(-i \alpha)^{\ell}}{\ell !} c_{\ell} = J_0(\alpha).
\end{equation}
To see this, we use a gamma function identity to get
\begin{equation*}
\sum_{n=0}^{\infty} \frac{(-1)^n \alpha^{2n}}{(2n)!} \frac{\Gamma(\frac{1+2n}{2})}{\Gamma(\frac12) \Gamma(1+n)} = \sum_{n=0}^{\infty} (-1)^n \frac{(\frac{\alpha}{2})^{2n}}{n!} \frac{1}{\Gamma(1+n)} = J_0(\alpha).
\end{equation*}
\end{remark}

We apply a smooth partition of unity, obtaining
\begin{equation*}
\int_{|t| \leq T-T_0} \left(\frac{t}{T}\right)^{\ell}  |G(it)|^2 \frac{dt}{2\pi} = \sum_{\Delta, \pm} I_{\pm \Delta} + O(I_{T_0}), \quad I_{\pm \Delta}  =  \int_{-\infty}^{\infty} w_{\pm \Delta}(t) \left(\frac{t}{T}\right)^{\ell}  |G(it)|^2 \frac{dt}{2\pi},
\end{equation*}
where we choose the partition such that each $w_{\pm \Delta}$ is supported on $|t\mp T| \asymp \Delta$, for $T_0 \ll \Delta \ll T$, except for one constituent of the partition which is supported on $|t| \leq T/2$, say.   The error term arises because the sharp truncation $|t| \leq T- T_0$ is not smooth, but instead we may over-extend the integral with $|t-T| \asymp T_0$ by multiplying by a smooth function; this gives rise to $I_{T_0}$.
In fact we choose $w$ of the form $w_{\pm \Delta}(t) = w(\frac{T \mp t}{\Delta} )$ (except for the constituent with $|t| \leq T/2$).

By squaring out, we derive
\begin{multline*}
 I_{\pm \Delta} = \frac{4 |\rho^*(1)|^2}{\cosh(\pi T)}
 \sum_{m,n=1}^{\infty} \frac{\tau_{iT}(m) \tau_{iT}(n)}{\sqrt{mn}}
 \\
 \int_{-\infty}^{\infty} \cosh(\pi T) \left(\frac{t}{T}\right)^{\ell} |\gamma_{V_T}(1/2+it)|^2 \Psi_{ T}(m,t) \Psi_{ T}(n,t) w_{\Delta}(t) \left(\frac{m}{n}\right)^{it} \frac{dt}{2\pi}.
\end{multline*}
Define $N := \sqrt{\Delta T}$
and observe that the support of $\Psi$ means that $m,n \asymp N$.

\subsection{Proof of Proposition \ref{prop:easybound}}
\label{section:largesieve}
 This follows almost immediately from the mean value theorem for Dirichlet polynomials, which states that $\int_0^{T} |\sum_{n \leq N} a_n n^{-it}|^2 \ll (N+ T) \sum_{n \leq N} |a_n|^2$, for an arbitrary sequence $a_n$.  The only subtlety is that in our desired application, the coefficient $a_n$ depend slightly on $t$.  One can remove this dependence by separation of variables in many possible ways.  We will use a variation of \cite[Lemma 4.2]{YoungPreprint}.  By this type of reasoning, we have
 \begin{equation*}
  I_{\pm \Delta} \ll \frac{T^{o(1)}}{(\Delta T)^{1/2}} \int_{|t-T| \ll \Delta} \Big| \int_{(0)} \widetilde{\psi}(-u) \left(\frac{\sqrt{T^2-t^2}}{2\pi } \right)^u \sum_{n} \frac{\tau_{iT}(n)}{n^{1/2+it+u}} du \Big|^2 dt.
 \end{equation*}
By the support of $\psi$, we have that $n \asymp (T^2-t^2)^{1/2} \ll (\Delta T)^{1/2}$, a redundant property that we enforce manually.
We may truncate the $u$-integral at $|u| \ll A T^{\varepsilon}$ and then apply Cauchy-Schwarz to the inner $u$-integral, obtaining
\begin{equation*}
 I_{\pm \Delta} \ll \frac{T^{\varepsilon}}{(\Delta T)^{1/2}} \| \psi^2 \|_1 \int_{|u| \ll A T^{\varepsilon}} \int_{|t-T| \ll \Delta} \Big|\sum_{n \ll (\Delta T)^{1/2}} \frac{\tau_{iT}(n)}{n^{1/2+it+u}}  \Big|^2 dt du + O(T^{-100}).
\end{equation*}
Now we can apply the mean value theorem for Dirichlet polynomials to the $t$-integral, and integrate trivially over $u$.

The analogous bound for $F$ would follow by repeating the same argument, including lower-order terms arising from Stirling.  For simplicity, we shall only use \eqref{eq:FGapproximation}, giving
\begin{equation*}
\int_{|t| \leq T-T_0} |F(it)|^2 dt \ll \int_{|t| \leq T-T_0} |G(it)|^2 dt
+ \frac{A^6 T}{T_0^2} T^{\varepsilon}.
\end{equation*}
Using $A \| \psi^2\|_1 \gg 1$, and $\frac{A^6 T}{T_0^2} \ll T^{\frac{6}{10}+1-\frac{18}{10}} = T^{-\frac{2}{10}}$ shows that this secondary term satisfies a (more than) satisfactory bound.

\subsection{Evaluation of the diagonal term}
Let $I_{\pm \Delta}^{\text{diag}}$ denote the terms with $m=n$, and let $I^{\text{diag}} = \sum_{\pm, \Delta} I_{\pm \Delta}^{\text{diag}}$.  We first focus on $I^{\text{diag}}$.
\begin{proposition}
 We have
 \begin{equation*}
  I^{\text{diag}} = \frac{6}{\pi} \log(1/4 + T^2) \left(c_{\ell} + O((\log T)^{-1/3+\varepsilon})\right)\| \psi^2 \|_1  +
  O(T^{-\frac{5}{12} + \varepsilon} T_0^{\frac14} A^{\frac12} + T_0^{-1} A^2 ).
 \end{equation*}
\end{proposition}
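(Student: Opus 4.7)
The plan is to isolate the diagonal $m=n$, apply Mellin inversion to reduce the resulting divisor sum to a contour integral of the Dirichlet series $Z(1+v, E_T)$, and extract the main term from the double pole at $v=0$ coming from the factor $\zeta(1+v)^2$. After summing the smooth pieces $I^{\mathrm{diag}}_{\pm\Delta}$, the inner sum
\[\sum_n \frac{\tau_{iT}(n)^2}{n}\psi^2\!\Big(\frac{Y}{n}\Big), \qquad Y = \frac{\sqrt{T^2-t^2}}{2\pi},\]
becomes, by Mellin inversion,
\begin{equation*}
\frac{1}{2\pi i}\int_{(c)} Y^v\,\widetilde{\psi^2}(-v)\,Z(1+v, E_T)\,dv, \qquad c > 0,
\end{equation*}
with $Z(1+v, E_T) = \zeta(1+v)^2\zeta(1+v-2iT)\zeta(1+v+2iT)/\zeta(2+2v)$ by \eqref{eq:Zformula}.

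I would shift this $v$-contour to $\mathrm{Re}(v) = -c_0/(\log T)^{2/3}$, just inside the Vinogradov--Korobov zero-free region, crossing only the double pole at $v=0$. With $h(v) = Y^v\widetilde{\psi^2}(-v)\zeta(1+v-2iT)\zeta(1+v+2iT)/\zeta(2+2v)$ and $\zeta(1+v)^2 = v^{-2}+2\gamma v^{-1}+O(1)$, the residue equals $h'(0)+2\gamma h(0)$. Its dominant piece $\log Y\cdot\widetilde{\psi^2}(0)|\zeta(1+2iT)|^2/\zeta(2)$ will produce the main term; the subleading pieces involve $(\zeta'/\zeta)(1\pm 2iT)\ll (\log T)^{2/3+\varepsilon}$ by \eqref{eq:VinogradovKorobovZetalogarithmicderivative}, together with $(\widetilde{\psi^2})'(0)/\widetilde{\psi^2}(0)$ and absolute constants, contributing at most $(\log T)^{2/3+\varepsilon}\|\psi^2\|_1$ after $t$-integration. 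On the shifted line, $|Z(1+v,E_T)|\ll (\log T)^{8/3+\varepsilon}$ uniformly, but this is killed by $Y^v\asymp e^{-c_0(\log T)^{1/3}}$, making the contour integral itself negligible.

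For the main $t$-integral, substitute Stirling's asymptotic $\cosh(\pi T)|\gamma_{V_T}(1/2+it)|^2\sim \pi^2/(2\sqrt{T^2-t^2})$ for $|t|<T$, together with $|\rho^*(1)|^2 = 2\cosh(\pi T)/(\pi|\zeta(1+2iT)|^2)$, so that the $|\zeta(1+2iT)|^2$ factors cancel. The change of variable $t=T\sin\theta$ then reduces the $\log Y$ piece to
\begin{equation*}
\frac{12}{\pi^2}\|\psi^2\|_1 \int_{-\pi/2}^{\pi/2}\sin^\ell\theta\,\big[\log T + \log|\cos\theta| - \log(2\pi)\big]\,d\theta.
\end{equation*}
The $\log T$ part evaluates to $\frac{6}{\pi}\log(1/4+T^2)\cdot c_\ell\|\psi^2\|_1$ via $\int_{-\pi/2}^{\pi/2}\sin^\ell\theta\,d\theta = \sqrt{\pi}\,\Gamma(\tfrac{\ell+1}{2})/\Gamma(1+\tfrac{\ell}{2})$ for $\ell$ even (and zero for $\ell$ odd), while the bounded $\log|\cos\theta|$ and $\log(2\pi)$ pieces are absorbed into the $(\log T)^{2/3+\varepsilon}$ error.

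The two technical error terms $T_0^{-1}A^2$ and $T^{-5/12+\varepsilon}T_0^{1/4}A^{1/2}$ arise, respectively, from smoothing the sharp cutoff at $|t|=T-T_0$ (boundary mass of width $T_0$ weighted by $(T^2-t^2)^{-1/2}$ and an $A$-factor from the Mellin length of $\widetilde{\psi^2}$), and from the Stirling remainders in $\cosh(\pi T)|\gamma_{V_T}|^2$ integrated against Iwaniec's fourth-moment bound for $\zeta(1/2+iu)$ from Proposition \ref{prop:Iwaniec}; the exponent $-5/12 = -1/2+1/12$ reflects $T^{-1/2}$ from the leading Stirling factor and $T^{1/12}$ from a quarter-power of Iwaniec's $T^{2/3}$. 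The main obstacle will be pinning down the relative $(\log T)^{-1/3+\varepsilon}$ error, which forces us to retain every subleading residue term, use Vinogradov--Korobov for $\zeta'/\zeta$ on the $1$-line, and push the $v$-contour exactly into the VK zero-free region so the shifted integral is negligible.
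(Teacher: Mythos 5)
Your plan of shifting the $v$-contour only to $\mathrm{Re}(v) = -c_0/(\log T)^{2/3}$ has a genuine gap: the resulting bound is \emph{not} negligible in the regime this paper actually needs. On that line $|Y^v| \asymp e^{-c_0 \log Y/(\log T)^{2/3}} \asymp e^{-c(\log T)^{1/3}}$ for $Y \asymp \sqrt{T^2-t^2} \gg T^{1-\eta/2}$, which is a sub-polynomial (in fact sub-exponential in $\log T$) saving. However, the $v$-integral itself is over an effective range $|\mathrm{Im}(v)| \ll A T^{\varepsilon}$, because $\widetilde{\psi^2}(-v)$ only starts to decay once $|\mathrm{Im}(v)| \gg A$; see \eqref{eq:PhiMellinBound}. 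After applying Vinogradov--Korobov to bound $|Z(1+v,E_T)| \ll (\log T)^{8/3+\varepsilon}$, the shifted integral is therefore of size $\gg A^{1/2}\, e^{-c(\log T)^{1/3}} (\log T)^{8/3+\varepsilon}$ (up to the $\|\psi^2\|$-type norms), and since $A$ is allowed to be as large as $T^{\delta}$ with $\delta > 0$, the factor $A^{1/2} = e^{(\delta/2)\log T}$ overwhelms $e^{-c(\log T)^{1/3}}$. The contour integral you claim is ``negligible'' is in fact unbounded as $T\to\infty$. Your approach would work if $\psi$ were a \emph{fixed} test function (so $A = O(1)$), but the whole point of Theorem \ref{thm:mainthm} is to allow $A$ to be a small power of $T$; the paper handles this by shifting to $\mathrm{Re}(v) = -1/2$, which yields a genuine power saving $(T^2-t^2)^{-1/4}$, at the cost of facing $\zeta(1/2+iv\pm iT)$ on the critical line and invoking Iwaniec's fourth moment bound (Proposition \ref{prop:Iwaniec}) after Cauchy--Schwarz. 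This is also why your attribution of the error $T^{-5/12+\varepsilon}T_0^{1/4}A^{1/2}$ is incorrect: it does not come from ``Stirling remainders integrated against Iwaniec'' in the $t$-integral; it is exactly the bound for the shifted $v$-contour at $\mathrm{Re}(v)=-1/2$, with $(T^2-t^2)^{-1/4}$ integrated against $(1+|T-|t||)^{-1/2}(1+|T+|t||)^{-1/2}$ producing $T^{-5/12}T_0^{1/4}$ and Cauchy--Schwarz plus Iwaniec producing $A^{1/2}(A+T^{2/3})^{1/2}$. Under your plan there is no reason Iwaniec would ever appear.

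The remainder of your calculation — the double pole at $v=0$, the use of Vinogradov--Korobov for $\zeta'/\zeta(1\pm 2iT)$ to get the relative error $(\log T)^{-1/3+\varepsilon}$, the Stirling asymptotic for $\cosh(\pi T)|\gamma_{V_T}(1/2+it)|^2$, the substitution $t = T\sin\theta$ and the beta integral $\int_{-\pi/2}^{\pi/2}\sin^{\ell}\theta\,d\theta$ — matches the paper's argument and is fine.
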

This is the main term stated in Proposition \ref{prop:bulklt}, with a compatible error term.

\begin{proof}
The diagonal term takes the form
\begin{equation*}
I^{\text{diag}} = 4 \frac{|\rho^*(1)|^2}{2\pi} \int_{|t| \leq T- T_0} \left(\frac{t}{T}\right)^{\ell}  |\gamma_{V_T}(1/2 + it)|^2
\sum_{n=1}^{\infty} \frac{\tau_{iT}(n)^2}{n} \Psi_{T}(n,t)^2 dt.
\end{equation*}
We shall asymptotically evaluate the inner sum over $n$, and then perform the $t$-integral afterwards.

Recall $\Psi_{T}(n,t)^2 = \Phi_{t, T}(n)$, and the bound \eqref{eq:PhiMellinBound}; in our case $X \asymp (T^2-t^2)^{1/2}$.

We have by Mellin inversion and \eqref{eq:Zformula} that
\begin{equation*}
 \sum_{n=1}^{\infty} \frac{\tau_{iT}(n)^2}{n} \Psi_{T}(n,t)^2 = \frac{1}{2 \pi i} \int_{(1)} \widetilde{\Phi}_{t,T}(v) \frac{\zeta(1+v)^2 \zeta(1+v+2iT) \zeta(1+v-2iT)}{\zeta(2+2v)} dv.
\end{equation*}
Moving the contour to the line $\text{Re}(v) = -1/2$, we obtain an error term of size
\begin{equation*}
 \ll (T^2 - t^2)^{-1/4}  \int_{|v| \leq A T^{\varepsilon}} |\zeta(1/2 + iv)|^2 |\zeta(1/2 + iv + iT)\zeta(1/2 + iv-iT)| dv,
\end{equation*}
plus an error from the truncation that will be dwarfed by the upcoming error term.  We may use Cauchy-Schwarz and Proposition \ref{prop:Iwaniec} to give that this is in turn
\begin{equation*}
 \ll T^{\varepsilon}  (T^2 - t^2)^{-1/4} A^{1/2} (T^{2/3} + A)^{1/2}  \ll T_0^{-1/4} T^{1/12 + \varepsilon} A^{1/2},
\end{equation*}
under the assumption $A \ll T^{2/3}$, which is valid.
In turn, this error contributes to $I^{\text{diag}}$ at most
\begin{equation*}
 \ll T_0^{-\frac14} T^{\frac{1}{12} + \varepsilon} A^{\frac12}  \int_{|t| \leq T-T_0} (1 + |t-T|)^{-1/2} (1 + |t+T|)^{-1/2} dt  \ll T^{-\frac{5}{12} + \varepsilon} T_0^{\frac14} A^{\frac12} .
\end{equation*}

Next we need to analyze the residue at $v=0$ (the ones at $v= \pm 2iT$ are small by the decay of $\widetilde{\Phi}$).  By a direct calculation, we have that the residue equals
\begin{equation*}
 \frac{|\zeta(1+2iT)|^2}{\zeta(2)} \left( \widetilde{\Phi}_{t,T}'(0) + \widetilde{\Phi}_{t, T}(0) (c + \frac{\zeta'}{\zeta}(1+2iT)) \right),
\end{equation*}
for some absolute constant $c$.  We have
\begin{equation*}
 \widetilde{\Phi}_{t,T}(0) = \int_0^{\infty} \Psi_{T}(x,t)^2 \frac{dx}{x}, \qquad \widetilde{\Phi}_{t,T}'(0) = \int_0^{\infty} \Psi_{T}(x,t)^2 \log x \frac{dx}{x}.
\end{equation*}
Using \eqref{eq:Psidef} and changing variables $x \rightarrow x^{-1} \frac{(T^2-t^2)^{1/2}}{2\pi}$, we derive that
\begin{equation*}
 \widetilde{\Phi}_{t,T}'(0) = (\log (\sqrt{T^2-t^2}) + O(1))\widetilde{\Phi}_{t,T}(0).
\end{equation*}
Using the Vinogradov-Korobov bound \eqref{eq:VinogradovKorobovZetalogarithmicderivative}, and the fact that $\log (T^2 -t^2) \gg \log T$, we derive that the residue is
\begin{equation*}
 \frac{|\zeta(1+2iT)|^2 \widetilde{\Phi}_{t,T}(0)}{\zeta(2)} \left( \log( \sqrt{T^2-t^2}) + O( (\log T)^{2/3 + \varepsilon}) \right).
\end{equation*}
In fact, by a change of variables, we see that $\widetilde{\Phi}_{t,T}(0)$ is equal to $\|\psi^2\|_1$.
Thus we have shown that the residue contributes to $I^{\text{diag}}$
\begin{equation*}
\frac{4|\rho^*(1)|^2 |\zeta(1+2iT)|^2 \|\psi^2\|_1}{2\pi \zeta(2)} \int_{|t| \leq T- T_0} \left(\frac{t}{T}\right)^{\ell}   |\gamma_{V_T}(1/2 + it)|^2  \log(\sqrt{T^2-t^2}) (1 + O((\log T)^{-1/3 + \varepsilon})) dt.
\end{equation*}

For the evaluation of the $t$-integral,  we claim
\begin{equation*}
 \int_{|t| \leq T-T_0} \left(\frac{t}{T}\right)^{\ell} \log(\sqrt{T^2-t^2}) |\gamma_{V_T}(1/2 + it)|^2 dt = \frac{\pi^2}{ \cosh(\pi T)} \log T \frac{\sqrt{\pi}}{2} \frac{\Gamma(\frac{1+\ell}{2})}{\Gamma(1+\frac{\ell}{2})} + O(e^{- \pi T}),
\end{equation*}
for $\ell$ even.  Of course, this integral vanishes for $\ell$ odd.  We derive this now.
Stirling's approximation gives
\begin{equation*}
 |\gamma_{V_T}(1/2+it)|^2 = \frac{ \pi^2}{2 \cosh(\pi T)} (T^2-t^2)^{-1/2} (1 + O(T_0^{-1})).
\end{equation*}
Changing variables $t = Tv$ followed by $v = \sin(u)$ gives
\begin{multline*}
 \int_{|t| \leq T-T_0} \left(\frac{t}{T}\right)^{\ell} \log(\sqrt{T^2-t^2}) |\gamma_{V_T}(1/2 + it)|^2 dt
\\
= \frac{\pi^2}{ \cosh(\pi T)} \int_0^{\frac{\pi}{2}-\varepsilon_0} (\sin u)^{\ell} (\log T + \log \cos u) (1 + O(T_0^{-1})) du,
\end{multline*}
where $\varepsilon_0$ is defined implicitly by the relation $\sin(\frac{\pi}{2}-\varepsilon_0) = 1 - \frac{T_0}{T}$.  By a Taylor expansion, we see $\varepsilon_0 \sim \sqrt{2T_0/T}$.  The part of the integral with $\log \cos u$ may be bounded trivially since this function is integrable at $\pi/2$.  We may also extend the integral to $\pi/2$, thereby getting
\begin{equation*}
\frac{\pi^2}{ \cosh(\pi T)} \log T \int_0^{\frac{\pi}{2}} (\sin u)^{\ell} du + O(e^{-\pi T}).
\end{equation*}
We also have from \cite[(3.621.1)]{GR} that
\begin{equation*}
\int_0^{\frac{\pi}{2}} (\sin u)^{\ell} du = \frac{\sqrt{\pi}}{2} \frac{\Gamma(\frac{1+\ell}{2})}{\Gamma(1+\frac{\ell}{2})},
\end{equation*}
which is the last step required for the proof of the claim.

Therefore, the residue gives to $I^{\text{diag}}$
\begin{equation*}
 \frac{4|\rho^*(1)|^2 |\zeta(1+2iT)|^2 \|\psi^2\|_1 }{2\pi \zeta(2)} \frac{\pi^2 \log T}{\cosh(\pi T)} \frac{\sqrt{\pi}}{2} \frac{\Gamma(\frac{1+\ell}{2})}{\Gamma(1+\frac{\ell}{2})} + O(1).
\end{equation*}
One then easily checks, using the formulas following \eqref{eq:ETFourier}, that
\begin{equation*}
 \frac{4|\rho^*(1)|^2 |\zeta(1+2iT)|^2 }{2\pi \zeta(2)} \frac{\pi^2 \log T}{\cosh(\pi T)} \frac{\sqrt{\pi}}{2} \Gamma(1/2) = \frac{12}{\pi} \log T. \qedhere
\end{equation*}

\end{proof}

\subsection{Off-diagonal terms analysis}
Let
\begin{equation*}
I_{\Delta}^{OD} = \frac{c}{|\zeta(1+2iT)|^2}
 \sum_{m \neq n} \frac{\tau_{iT}(m) \tau_{iT}(n)}{\sqrt{mn}}
 K(m,n),
\end{equation*}
where $c = 4/\pi^2$, and
\begin{equation*}
 K(m,n) = \int_{-\infty}^{\infty} \cosh(\pi T) \gamma_{V_T}(1/2+it)|^2 \left(\frac{t}{T}\right)^{\ell} \Psi_{T}(m,t) \Psi_{T}(n,t) w_{\Delta}(t) \left(\frac{m}{n}\right)^{it} dt.
\end{equation*}
We focus on the case $I_{+ \Delta} = I_{\Delta}$ since the opposite sign case is estimated in the same way by conjugation.
In this section, we show
\begin{equation}
\label{eq:RdeltaBound}
 I_{\Delta}^{OD} = M.T.^{(\Delta)}_{OD} + O(T^{\varepsilon} ( A^3 \Delta^{-\frac{9}{4}} T^{\frac{25}{12}} + A^{\frac{3}{2}} \Delta^{-\frac{5}{8}} T^{\frac{13}{24}})),
\end{equation}
where $M.T.^{(\Delta)}_{OD}$ is a certain main term analyzed in Section \ref{section:ODMT}.
The smallest value of $\Delta$ is $T_0$, which leads to two of the error terms appearing in Proposition \ref{prop:bulklt}.

We need to estimate $K(m,n)$.
One can quickly check using Stirling's formula that for $|t|\leq T - T_0$, we have
\begin{equation*}
\frac{d^j}{dt^j} \cosh(\pi T) |\gamma_{V_T}(1/2 + it)|^2 \ll
 T^{-1/2} (T-|t|))^{-k-1/2}, \quad k=0,1,2,\dots.
\end{equation*}
Combined with \eqref{eq:PsiDerivativeBoundsWithxAndt}, this shows that
\begin{equation}
\label{eq:PsiPsiderivativebound}
\frac{\partial^j}{\partial t^j} \left(\frac{t}{T}\right)^{\ell} \Psi_{ T}(m,t) \Psi_{ T}(n,t) \cosh(\pi T) |\gamma_{V_T}(1/2 + it)|^2 w_{\Delta}(t) \ll T^{-1/2} \Delta^{-1/2} \left(\frac{A}{\Delta}\right)^j,
\end{equation}
where when dealing with derivatives of $(t/T)^{\ell}$, we have used that $\frac{\ell}{T} \ll \frac{1}{\Delta}$.

In light of \eqref{eq:PsiPsiderivativebound}, we may view $K$ as the Fourier transform of a function with controlled derivatives.  By a  standard integration by parts argument, we have that
\begin{equation*}
 K(m,n) \ll (\Delta T)^{-1/2} \Delta \left(1 + \frac{\Delta}{A} |\log(m/n)| \right)^{-C},
\end{equation*}
where $C > 0$ is arbitrary.  Therefore, we may assume that $m= n+h$ where $|h| \ll \frac{NA}{\Delta} T^{\varepsilon}$.
Furthermore, for the relevant values of $h$, we have
\begin{equation*}
\frac{d^j}{dx^j} K(x+h, x) \ll (\Delta T)^{-1/2} \Delta T^{\varepsilon} \left(\frac{AT}{N \Delta}\right)^j.
\end{equation*}
Then we may apply Proposition \ref{prop:SCS}, with $Y= N$, $P = \frac{AT}{\Delta}$, $R \asymp P$, and $M = \frac{NA}{\Delta} T^{\varepsilon}$, which gives
\begin{equation*}
R_{\Delta} = M.T.^{(\Delta)}_{OD} + E.T.,
\end{equation*}
where
\begin{equation*}
E.T. \ll (\Delta T)^{-\frac12} \Delta N^{-1} T^{\varepsilon} \left(\frac{NA}{\Delta}\right)
\left(T^{\frac13} N^{\frac12} R^{2} + T^{\frac16} N^{\frac34} R^{\frac12} \right).
\end{equation*}
Substituting for $R$ and $N$ and simplifying, we have
\begin{equation*}
E.T. \ll  T^{\varepsilon} ( A^3 \Delta^{-\frac94} T^{\frac{25}{12}} + A^{\frac{3}{2}} \Delta^{-\frac{5}{8}} T^{\frac{13}{24}}),
\end{equation*}
consistent with \eqref{eq:RdeltaBound}.

\subsection{Off-diagonal main term}
\label{section:ODMT}
Next we estimate the main term $M.T.^{(\Delta)}_{OD}$, which turns out to be a rather subtle problem.  Some of the ideas used here were initially developed in \cite{YoungPreprint}.
\begin{lemma}
\label{lemma:MTODbound}
Assuming $\ell \ll \log T$, we have
 \begin{equation}
 \label{eq:MTODbound}
\sum_{\substack{T_0 \ll \Delta \ll T \\ \Delta \text{dyadic}}}  M.T.^{(\Delta)}_{OD} \ll (\log \log T) \| \psi^2 \|_1 + T_0^{-2} A^3 T^{1+\varepsilon}.
 \end{equation}
\end{lemma}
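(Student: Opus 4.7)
The plan is to evaluate $M.T._{OD}^{(\Delta)}$ explicitly by opening its defining $x$- and $t$-integrals, applying Mellin inversion to separate the shift variable $h$, and exploiting cancellation when the dyadic pieces are reassembled into a single smooth cutoff.

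First, I would substitute the $M.T.$ formula from Proposition~\ref{prop:SCS} (applied with weight $w_h(x)=K(x+h,x)/\sqrt{x(x+h)}$ after writing $m=n+h$ in the definition of $I_\Delta^{OD}$) to write
\[
M.T._{OD}^{(\Delta)} = \frac{c}{\zeta(2)}\sum_{\pm}\sum_{h\neq 0}\sigma_{-1}(h)\int_{-\infty}^{\infty} w_\Delta(t)\Big(\frac{t}{T}\Big)^\ell \cosh(\pi T)|\gamma_{V_T}(\tfrac12+it)|^2 \,\mathcal{X}_\pm(h,t)\,dt,
\]
where $\mathcal{X}_\pm(h,t)=\int_{\max(0,-h)}^\infty (x+h)^{\mp iT+it}x^{\pm iT-it}\Psi_T(x+h,t)\Psi_T(x,t)\,\frac{dx}{\sqrt{x(x+h)}}$. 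I would then swap the dyadic sum with everything else and use $\sum_\Delta w_\Delta(t)=W(t)$, a smooth cutoff of $|t|\leq T-T_0$; performing the $\Delta$-sum at this point is the crucial move and is the source of the final $\log\log T$ saving.

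Next, I would apply Mellin inversion $\Psi_T(y,t)=\frac{1}{2\pi i}\int_{(a)}\widetilde{\psi}(-u)\big(\sqrt{T^2-t^2}/(2\pi y)\big)^u du$ to each factor $\Psi_T$. Setting $s_j=\tfrac12+u_j\pm iT\mp it$ with appropriate sign conventions, the $x$-integral becomes a Beta integral equal to $|h|^{-(u_1+u_2)}$ times a ratio of Gamma functions (with slightly different structure for $h>0$ versus $h<0$). The $h$-sum then evaluates as $\sum_{h\neq 0}\sigma_{-1}(h)|h|^{-(u_1+u_2)}=2\zeta(u_1+u_2)\zeta(1+u_1+u_2)$, eliminating the $h$-dependence entirely. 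What remains is a double contour integral in $u_1,u_2$ coupled with the $t$-integral, weighted by Gamma ratios, $\zeta(u_1+u_2)\zeta(1+u_1+u_2)$, $\widetilde{\psi}(-u_1)\widetilde{\psi}(-u_2)$, $W(t)(t/T)^\ell$, and $\cosh(\pi T)|\gamma_{V_T}(\tfrac12+it)|^2$.

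Applying Stirling, the factor $\cosh(\pi T)|\gamma_{V_T}(\tfrac12+it)|^2$ and the Gamma ratios from the Beta integral mostly cancel one another's $t$-dependence, leaving a polynomially bounded integrand. To extract the claimed bound I would shift the $(u_1+u_2)$-contour just past $\mathrm{Re}=1$ and pick up the pole of $\zeta(u_1+u_2)$; the Vinogradov--Korobov bound $\zeta'/\zeta(1+it)\ll(\log T)^{2/3+\varepsilon}$ controls the residue, and Proposition~\ref{prop:Iwaniec} controls the shifted integral after Cauchy--Schwarz in $t$. The error $T_0^{-2}A^3 T^{1+\varepsilon}$ encodes the truncation of the $u$-contours at height $\asymp AT^\varepsilon$ together with the boundary of the $t$-integral near $|t|=T-T_0$ where the Stirling asymptotic degrades.

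The main obstacle is the order in which the $\Delta$-sum and the $(u_1+u_2)$-contour shift are performed. A separate bound on each $M.T._{OD}^{(\Delta)}$ would give at best $\|\psi^2\|_1$ per dyadic block, and summing would contribute a full $\log T$ rather than $\log\log T$. The improvement requires reassembling the partition of unity \emph{before} crossing $\mathrm{Re}(u_1+u_2)=1$, so that the residue of $\zeta(u_1+u_2)$ appears against a clean smooth cutoff and one can absorb a controlled Vinogradov--Korobov loss. A secondary difficulty is that the Beta evaluation produces structurally different Gamma ratios for $h>0$ and $h<0$, and these must be combined with the two $\pm$-sign contributions from Proposition~\ref{prop:SCS} to obtain a clean expression before the contour shift.
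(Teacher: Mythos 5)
Your plan is genuinely different from the paper's. The paper keeps the dyadic decomposition throughout: it Taylor-expands $\Psi_T(x+h,t)$ and the phase $((x+h)/x)^{it\mp iT}$ to separate $h$ from $x$ up to acceptable errors, changes variables $y=\sqrt{T^2-t^2}/(2\pi x)$ and then $t\mapsto T-\Delta t$, $t=v^2$, and reduces each block to a sum $S_\Delta=\sum_{h\neq 0}\sigma_{-1}(h)\,\widehat{w_2}(h/V)$ which it estimates by writing $\sigma_{-1}(h)=\sum_{ab=h}a^{-1}$, applying Poisson in $b$ for $a\leq D$ (where the Poisson side vanishes since $w_2(0)=0$ and $V/a$ is large), bounding trivially for $a>D$, and then choosing $D=\log T$; the $\log\log T$ in the lemma is exactly $\log D$. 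Your route --- reassemble the partition into a single smooth cutoff $W(t)$ first, Mellin-expand both $\Psi_T$ factors, evaluate the $x$-integral as a Beta function, sum $h$ into $\zeta(u_1+u_2)\zeta(1+u_1+u_2)$, and shift contours --- is a classical alternative approach to shifted-divisor sums and is not what the paper does.

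There are, however, real gaps in the proposal. The central one: you never explain why the residue of $\zeta(u_1+u_2)$ at $u_1+u_2=1$ is small. The trivial bound already gives $O(\|\psi^2\|_1)$ per dyadic block and hence $\log T$ after summing; crossing the $\zeta$-pole simply \emph{recreates} a contribution of that same order unless there is cancellation. There is in fact a cancellation here (the relevant Mellin value at the pole turns out to be purely imaginary while only the real part survives after combining $h>0$, $h<0$, and the two $\pm$ signs), but this cancellation is precisely the crux of the lemma, and your sketch neither identifies it nor offers an alternative mechanism. Merely ``reassembling the partition before crossing $\mathrm{Re}=1$'' does not by itself change the size of the residue. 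Secondly, your invocation of Vinogradov--Korobov is misplaced: the pole at $u_1+u_2=1$ is a simple pole of $\zeta(u_1+u_2)$ with $\zeta(1+u_1+u_2)=\zeta(2)$ regular there, so no $\zeta'/\zeta$ factor arises at this residue; VK plays a role in the diagonal term (via $\zeta'/\zeta(1+2iT)$), not here. Finally, the Beta-integral evaluation $\int_0^\infty (x+h)^{a-1}x^{b-1}dx$ only converges when $\mathrm{Re}(b)>0$ and $\mathrm{Re}(a+b)<1$, which forces the $u_1,u_2$ contours out of the region $\mathrm{Re}(u_j)>1/2$ where the Mellin inversion for $\Psi_T$ was set up; managing this shift (and the distinct $h>0$, $h<0$ Gamma structures) is nontrivial and is left unaddressed.
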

This off-diagonal main term gives to $I_{\Delta}^{OD}$
\begin{multline*}
M.T.^{(\Delta)}_{OD} = c' \sum_{\pm}
\sum_{1 \leq |h| \ll \frac{N A}{\Delta} T^{\varepsilon} }
 \sigma_{-1}(h) \int_{\max(0, -h)}^{\infty} \left(\frac{x + h}{x}\right)^{\mp iT}
\\
 \int_{-\infty}^{\infty} \cosh(\pi T) |\gamma_{V_T}(1/2+it)|^2 \left(\frac{t}{T}\right)^{\ell} \Psi_{T}(x+h,t) \Psi_{T}(x,t) w_{\Delta}(t) \left(\frac{x+h}{x}\right)^{it} dt
 \frac{ dx}{\sqrt{x(x+h)}},
\end{multline*}
where $c' = \frac{c}{\zeta(2)} = \frac{24}{\pi^4}$.
With a crude trivial bound, we can only show $M.T.^{(\Delta)}_{OD} = O(A T^{\varepsilon})$, which is unsatisfactory.  We shall improve on this in stages.  The first step is to learn that we may truncate the $h$-sum at a smaller point, leading to a saving by a factor $A$.

Using a first-order Taylor approximation combined with \eqref{eq:PsiDerivativeBoundsWithxAndt}, we have $\Psi_T(x+h, t) = \Psi_T(x,t) + O(N^{-1} |h| A)$.  This saves a factor $\ll \Delta^{-1} A^2 T^{\varepsilon}$ from the trivial bound.  We can similarly approximate $(x+h)^{-1/2}$, and extend the $x$-integral to the positive reals trivially from the support of $\Psi_T$.  Thus
\begin{multline}
\label{eq:MTODintermediateTerm}
M.T.^{(\Delta)}_{OD} =  c' \sum_{\pm}
\sum_{1 \leq |h| \ll \frac{N A}{\Delta} T^{\varepsilon} }
 \sigma_{-1}(h) \int_{0}^{\infty} \left(\frac{x + h}{x}\right)^{\mp iT}
\\
 \int_{-\infty}^{\infty} \cosh(\pi T) |\gamma_{V_T}(1/2+it)|^2 \left(\frac{t}{T}\right)^{\ell} \Psi_{T}(x,t)^2 w_{\Delta}(t) \left(\frac{x+h}{x}\right)^{it} dt
 \frac{ dx}{x} + O(\Delta^{-1} A^3 T^{\varepsilon}),
\end{multline}
where the error term is acceptable for Lemma \ref{lemma:MTODbound} since $T_0 \ll \Delta \ll T$.

Next we use
\begin{equation*}
 \left(\frac{x+h}{x}\right)^{it \mp iT} = e^{(it \mp iT) \log(1 + \frac{h}{x})} = e^{(it \mp iT) \frac{h}{x}} \left(1 + O\left(\frac{T h^2}{N^2}\right)\right).
\end{equation*}
This $O(\cdot)$ term is $\ll \Delta^{-2} A^2 T^{1+\varepsilon}$, and so it contributes to $M.T.^{(\Delta)}_{OD}$ an amount that is
$ \ll \Delta^{-2} A^3 T^{1+\varepsilon}$, consistent with \eqref{eq:MTODbound}.  

Taken together, these elementary estimates show
\begin{equation*}
 M.T.^{(\Delta)}_{OD} =  c' \sum_{\pm} M_{\pm} + O(\Delta^{-2} A^3 T^{1+\varepsilon}),
\end{equation*}
where
\begin{equation*}
 M_{\pm} = \sum_{h \neq 0 }
 \sigma_{-1}(h) \int_{0}^{\infty}
\\
 \int_{-\infty}^{\infty} \left(\frac{t}{T}\right)^{\ell} (T-t)^{-1/2} (T+t)^{-1/2} \psi^2 \left(\frac{\sqrt{T^2-t^2}}{2\pi x} \right) w_{\Delta}(t) e^{(it\mp iT) \frac{h}{x}} dt
 \frac{ dx}{x}.
\end{equation*}
Here we used \eqref{eq:Psidef} to replace  $\Psi_{T}(x,t)^2$ by $\psi^2 \left(\frac{\sqrt{T^2-t^2}}{2\pi x} \right)$ in \eqref{eq:MTODintermediateTerm}.

Changing variables $y = \frac{\sqrt{T^2-t^2}}{2\pi x}$,  and recalling that $w_{\Delta}(t) = w(\frac{T-t}{\Delta})$ for some fixed function $w$,
we obtain
\begin{equation*}
 M_{\pm} = \int_0^{\infty} \frac{\psi(y)^2}{y} \sum_{h \neq 0 }
 \sigma_{-1}(h)
 \int_{-\infty}^{\infty} \left(\frac{t}{T}\right)^{\ell}  (T-t)^{-1/2} (T+t)^{-1/2} w\left(\frac{T-t}{\Delta}\right)
e\left(-yh\sqrt{\frac{T\mp t}{T\pm t}}  \right)
 dt dy.
\end{equation*}

By \cite[Lemma 8.1]{BKY} (repeated integration by parts),
\begin{equation*}
 \int_{-\infty}^{\infty} \left(\frac{t}{T}\right)^{\ell}  (T-t)^{-1/2} (T+t)^{-1/2} w\left(\frac{T-t}{\Delta}\right)
e\left(-yh\sqrt{\frac{T\mp t}{T\pm t}}  \right)
 dt \ll \frac{\Delta}{(\Delta T)^{1/2}}
\left(1 + |h| \left(\frac{\Delta}{T}\right)^{\pm \frac12} \right)^{-100}.
\end{equation*}
When bounding derivatives of $(t/T)^{\ell}$,
we use that $|t| \leq T - c \Delta$ for some constant $c > 0$ fixed, which leads to the bound
\begin{equation*}
 \frac{d}{dt} \left(\frac{t}{T}\right)^{\ell} = \frac{\ell}{T} \left(\frac{t}{T}\right)^{\ell-1} \ll \frac{1}{\Delta} \frac{\ell \Delta}{T} \exp(-c\ell \frac{\Delta}{T}) \ll \Delta^{-1}.
\end{equation*}

For $M_{-}$, we have trivially that
\begin{equation*}
M_{-} \ll \| \psi^2 \|_1 \frac{\Delta}{(\Delta T)^{1/2}} \sum_{h \neq 0}  \sigma_{-1}(h)
\left(1 + |h| \left(\frac{\Delta}{T}\right)^{-\frac12} \right)^{-100} \ll \|\psi^2\|_1 \frac{\Delta}{T}.
\end{equation*}
Therefore, we obtain the following bound which is consistent with \eqref{eq:MTODbound}:
\begin{equation*}
 \sum_{\substack{1\ll \Delta \ll T \\ \Delta \text{ dyadic}}} M_{-} \ll \| \psi^2 \|_1.
\end{equation*}
Unfortunately, the same method applied to $M_{+}$ only shows $M_{+} \ll \| \psi^2 \|_1$, and summing over $\Delta$ introduces a factor $\log T$ which is the same order of magnitude as the diagonal main term.

We have so far succeeded in saving the factor $A$ and essentially removing the dependence on $\psi^2$ from the sum over $h$, and it remains to obtain a small savings in $M_{+}$, to save the factor $\log T$.  For the values of $M_{+}$ with $\Delta \geq \frac{T}{(\log T)^{100}}$, we are free to use the trivial bound $M_{+} \ll \| \psi^2 \|_1$ giving
\begin{equation*}
 \sum_{\substack{\Delta \geq \frac{T}{(\log T)^{100}} \\ \text{dyadic}}} M_{+} \ll (\log \log T) \| \psi^2 \|_1,
\end{equation*}
which is an acceptable error term.

For the rest of the proof we assume $\Delta \leq \frac{T}{(\log T)^{100}}$.  In the estimation of $M_{+}$, for the terms with $|h| \sqrt{\frac{\Delta}{T}} \geq (\log T)^{10}$, we save a power of $\log T$, and so these terms are acceptable.  Then assume
\begin{equation*}
 |h| \leq \sqrt{\frac{T}{\Delta}} (\log T)^{10}, \qquad \Delta \leq \frac{T}{(\log T)^{100}}.
\end{equation*}
In the $t$-integral in the definition of $M_{+}$, change variables $t \rightarrow T - \Delta t$, giving
\begin{multline*}
 \int_{-\infty}^{\infty} \left(\frac{t}{T}\right)^{\ell}  (T-t)^{-1/2} (T+t)^{-1/2} w\left(\frac{T-t}{\Delta}\right)
e\left(-yh\sqrt{\frac{T-t}{T + t}}  \right)
 dt
 \\
 = \frac{\Delta}{(\Delta T)^{1/2}} \int_{-\infty}^{\infty} \left(1-\frac{\Delta}{T} t\right)^{\ell} \frac{w(t)}{\sqrt{t}} e\left(\frac{-hy \Delta^{1/2} t^{1/2} }{(2T)^{1/2}} \right) (1 + O((\log T)^{-90})) dt.
\end{multline*}
This error term gives an acceptable error by trivial estimations.  We may also
observe
\begin{equation*}
 \left(1-\frac{\Delta}{T} t\right)^{\ell} = \exp\Big(\ell \Big(\frac{\Delta t}{T} + O\Big(\frac{\Delta^2}{T^2}\Big)\Big)\Big) = \exp(O((\log T)^{-99})) = 1 + O((\log T)^{-99}),
\end{equation*}
so we may discard this error term too.

Next let $t = v^2$, and define $w_2(v) = w(v^2)$.  Then we obtain
\begin{equation*}
 M_{+} = \frac{\Delta^{1/2}}{T^{1/2}} \int_0^{\infty} \frac{\psi(y)^2}{y} \sum_{0 < |h| \leq \sqrt{\frac{T}{\Delta}} (\log T)^{10}} \sigma_{-1}(h) \int_0^{\infty} w_2(v) e\left(-hy \frac{\Delta^{1/2}}{(2T)^{1/2}} v\right) dv
 + O(\|\psi^2\|_1 (\log T)^{-5}).
\end{equation*}
In the main term, we are then free to extend the sum back to all of $h \neq 0$.
Now examine the inner sum over $h$, namely
\begin{equation*}
 S_{\Delta} := \sum_{h \neq 0} \sigma_{-1}(h) \widehat{w_2}(h/V), \quad \text{where} \quad V = \frac{(2T)^{1/2}}{y \Delta^{1/2}}.
\end{equation*}
We claim that
\begin{equation*}
 \sum_{\Delta \text{ dyadic}} \frac{\Delta^{1/2}}{T^{1/2}} S_{\Delta} \ll (\log \log T),
\end{equation*}
uniformly for $y \in [1, 3]$, which suffices to complete the proof.

\begin{proof}[Proof of claim]
 Write $\sigma_{-1}(h) = \sum_{ab = h} a^{-1}$ where $a \geq 1$.  For $a > D$, with $D \geq 1$ a parameter to be chosen later, we apply a trivial bound giving
 \begin{equation*}
  \sum_{a > D} a^{-1} \sum_{b \neq 0} \widehat{w_2}(ab/V) \ll \sum_{a > D} a^{-1} \frac{V}{a} \ll \frac{V}{D}.
 \end{equation*}
For $a \leq D$, we apply Poisson summation in $b$, giving
\begin{equation*}
 \sum_{a \leq D} a^{-1} \left( -\widehat{w_2}(0) + \frac{V}{a} \sum_{l \in \mathbb{Z}} w_2\left(\frac{lV}{a}\right) \right).
\end{equation*}
Since $w_2(0) = 0$, and moreover $w_2$ has support on a fixed subset of the positive reals, if we set $D \ll V$ with a small implied constant, then the sum over $l$ is empty.  Therefore, we obtain
\begin{equation*}
 S_{\Delta} \ll \frac{(T/\Delta)^{1/2}}{D} + \log D.
\end{equation*}
We choose $D = \log T$, which satisfies the requirement $D \leq \epsilon V$ since $V \gg (\log T)^{50}$.  That is, we have shown
\begin{equation*}
\frac{\Delta^{1/2}}{T^{1/2}} S_{\Delta} \ll \frac{1}{\log T} + \frac{\Delta^{1/2}}{T^{1/2}} \log \log T.
\end{equation*}
 Summing this over dyadic values of $\Delta \ll T$ gives the claim.
\end{proof}

\section{Completion of the proof, and optimization of parameters}
Here we put together the various results in this paper and complete the proof of Theorem \ref{thm:mainthm}.

Applying Proposition \ref{prop:bulklt} to
\eqref{eq:IpsialphaPostBinomialTheorem3b}, and using $1 \ll A \| \psi^2 \|_1$ in the error term, we deduce
\begin{multline}
\label{eq:IpsialphaWithAllErrorTerms}
 I_{\psi, \alpha}(T) = \frac{6}{\pi} \log(1/4 + T^2) \sum_{\ell=0}^{L} \frac{(-i\alpha)^{\ell}}{\ell !} c_{\ell}\|\psi^2 \|_1 + O(\|\psi^2 \|_1 (\log T)^{2/3 + \varepsilon})
 \\
 + O(ET_1) + O(ET_2) + O(ET_3) + O(ET_4),
\end{multline}
where
\begin{equation*}
ET_4 =
\| \psi^2\|_1 T^{\varepsilon}\left( T^{-\frac{5}{12}} T_0^{\frac14} A^{\frac32} + T_0^{-1} A^3
+   A^4 T_0^{-\frac{9}{4}} T^{\frac{25}{12}} + A^{\frac{5}{2}} T_0^{-\frac{5}{8}} T^{\frac{13}{24}}\right).
\end{equation*}
Recall that $ET_i$ with $i=1,2,3$ are given by \eqref{eq:ET1def}, \eqref{eq:ET2def}, \eqref{eq:ET3def}.


It is easy to see that in \eqref{eq:IpsialphaWithAllErrorTerms} we may extend the sum to all $\ell \in \mathbb{N}$ without making a new error term.  The main term is evaluated with \eqref{eq:cellsumevaluation}, and gives the desired expression in Theorem \ref{thm:mainthm}.

Now we estimate all the error terms.
We have
\[
ET_1 + ET_4 \ll \|\psi^2\|_1 +  \|\psi^2 \|_1 T^{\varepsilon}\left( T^{-\frac{5}{12}} T_0^{\frac14} A^{\frac32} + T_0^{-1} A^3
+   A^4 T_0^{-\frac{9}{4}} T^{\frac{25}{12}} + A^{\frac{5}{2}} T_0^{-\frac{5}{8}} T^{\frac{13}{24}}
+A T_0^{-\frac12} T^{\frac12} + AT^{-\frac16} \right).
\]
After some experimentation, we see that this error term is minimized when
\begin{equation*}
 A T_0^{\frac12} T^{-\frac12} = A^4 T^{\frac{25}{12}} T_0^{-\frac{9}{4}},
\end{equation*}
that is, we should pick
\begin{equation*}
 T_0 = A^{\frac{12}{11}} T^{\frac{31}{33}}.
\end{equation*}
With this choice, we have
\begin{equation*}
ET_1 + ET_4 \ll \|\psi^2\|_1+ \|\psi^2\|_1 T^{\varepsilon}\left( A^{\frac{39}{22}} T^{-\frac{2}{11}} + A^{\frac{21}{11}} T^{-\frac{31}{33}} + A^{\frac{17}{11}} T^{-\frac{1}{33}} + A^{\frac{20}{11}} T^{-\frac{1}{22}} + A T^{-\frac{1}{6}}\right),
\end{equation*}
and also $T_0 \gg T^{31/33} \gg T^{9/10}$, which was a condition that is required in Propositions \ref{prop:easybound} and \ref{prop:bulklt}.

Now a direct calculation shows that if $A \ll T^{\delta}$ with $\delta < 1/51$, then $ET_1 + ET_4\ll \| \psi^2 \|_1$.

Finally, we estimate $ET_2$ and $ET_3$ with this choice of $T_0$, and range of $A$'s.  
We have $ET_2 \ll A T^{-1+\varepsilon} \|\psi^2 \|_1$, which is much more than sufficient.  For $ET_3$, we have
\begin{equation*}
 ET_3 \ll
 T^{\varepsilon}(A^{\frac{32}{11}} T^{-\frac{29}{66}} + A^{\frac{53}{11}} T^{-\frac{29}{33}} )\|\psi^2 \|_1,
\end{equation*}
which again is more than sufficient for $A \ll T^{1/51}$.

This completes the proof of Theorem \ref{thm:mainthm}.

\section{Sign changes of the Eisenstein series}
In this section, we deduce from Theorem \ref{thm:mainthm} a lower bound for the number of sign changes of the Eisenstein series on $\{iy~:~1<y<3\} \subset \mathbb{H}$, and prove Theorem \ref{thm:signchange}. To this end, we first fix $0<\delta<1/51$ and a non-negative function $\psi_0(x) \in C_0^\infty \left( -\frac{1}{2},\frac{1}{2} \right)$. Let
\[
\psi_{T,j}(y) = \psi_0 \left(\frac{y-(jT^{-\delta}+1)}{T^{-\delta}}\right)
\]
for $j=1,2,\ldots,  \lceil T^\delta \rceil$. Note that $\psi_{T,j}$ satisfies \eqref{eq:psibound}, \eqref{eq:psiintegrallowerbound}, and \eqref{eq:psiprimeandpsisquaredrelation} with $A = T^\delta$. Hence as a consequence of Theorem \ref{thm:mainthm}, there exists a sufficiently large constant $C>0$ depending only on $\delta$ and $\epsilon$, such that
\[
I_{\psi_{T,j},4} < \left(\frac{6}{\pi} \log(1/4+T^2) J_0(4) + C (\log T)^{2/3+\epsilon} \right)\|\psi_{T,j}^2\|_1
\]
for all $T$ and $j$. Observing that $J_0(4) = -0.397\ldots <0$, this in particular implies that $I_{\psi_{T,j},4} < 0$ for all sufficiently large $T$ and for each $j=1,2,\ldots, \lceil T^\delta \rceil$.

Since we have assumed that $\psi_{T,j}$ is non-negative, it follows from the definition of $I_{\psi, \alpha}$
\[
 I_{\psi, \alpha}(T) = \int_0^{\infty} \psi(y) \psi\left(\left(1+\frac{\alpha}{T}\right)y\right)
E_T^*(iy) E_T^*\left(i\left(1+\frac{\alpha}{T}\right)y\right) \frac{dy}{y},
\]
that either $E_T^*(iy)$ or $E_T^*\left(i\left(1+\frac{4}{T}\right)y\right)$ has at least one sign change on the support of $\psi_{T,j}(y) \psi_{T,j}\left(y\left(1+\frac{4}{T}\right)\right)$. Therefore $E_T^*(iy)$ has at least one sign change in the range
\[
1+\left(j-\frac{1}{2}\right) T^{-\delta}<y< 1+\left(j+\frac{1}{2}\right) T^{-\delta},
\]
for each $j=1,2,\ldots, \lceil T^\delta \rceil$. From this we conclude that $E_{T}^*(iy)$ has at least $T^{\delta}$ sign changes on the interval $1<y<3$ for all sufficiently large $T$.

\section{Acknowledgments}
The authors thank the Institute for Advanced Study for an excellent working environment when this work was initially conceived during the 2014-2015 year.

\bibliographystyle{alpha}
\bibliography{bibfile}
\end{document}